\newtheorem{theorem}{Theorem}[section]
\newtheorem{lemma}[theorem]{Lemma}
\newtheorem{proposition}[theorem]{Proposition}
\theoremstyle{definition}
\newtheorem{definition}[theorem]{Definition}
\newtheorem{remark}[theorem]{Remark}
\numberwithin{equation}{section}
\renewcommand{\mod}[1]{{\ifmmode\text{\rm\ (mod~$#1$)}\else\discretionary{}{}{\hbox{ }}\rm(mod~$#1$)\fi}}
\newcommand{\A}{{\mathcal A}}
\newcommand{\B}{{\mathcal B}}
\newcommand{\Blq}{\B_{\ell,q}}
\newcommand{\C}{{\mathbb C}}
\newcommand{\E}{{\mathcal E}}
\newcommand{\cH}{{\mathcal H}}
\newcommand{\Hlq}{\cH_{\ell,q}}
\newcommand{\I}{{\mathcal I}}
\newcommand{\Ilq}{\I_{\ell,q}}
\newcommand{\M}{{\mathcal M}}
\newcommand{\Mlq}{\M_{\ell,q}}
\newcommand{\cN}{{\mathcal N}}
\newcommand{\N}{{\mathbb N}}
\newcommand{\Z}{{\mathbb Z}}
\newcommand{\ord}{\mathop{\rm ord}}
\newcommand{\mpq}{m(p,q)}
\newcommand{\mlq}{m(\ell,q)}
\DeclareMathAlphabet{\mathpzc}{OT1}{pzc}{m}{it}
\begin{document}

\title{The least primary factor of the multiplicative group}
\author{Greg Martin and Chau Nguyen}
\date{\today}
\address{Department of Mathematics \\ University of British Columbia \\ Room 121, 1984 Mathematics Road \\ Vancouver, BC, Canada \ V6T 1Z2}
\email{gerg@math.ubc.ca}
\email{nguyenminhchauptnk@gmail.com}
\subjclass[2010]{11N25, 11N37, 11N45, 11N64, 20K01}
\maketitle

\begin{abstract}
Let $S(n)$ denote the least primary factor in the primary decomposition of the multiplicative group $M_n = (\Bbb Z/n\Bbb Z)^\times$. We give an asymptotic formula, with order of magnitude $x/(\log x)^{1/2}$, for the counting function of those integers $n$ for which $S(n) \ne 2$. We also give an asymptotic formula, for any prime power~$q$, for the counting function of those integers $n$ for which $S(n) = q$. This group-theoretic problem can be reduced to problems of counting integers with restrictions on their prime factors, allowing it to be addressed by classical techniques of analytic number theory.
\end{abstract}

\section{Introduction}

The {\em multiplicative group} modulo~$n$ is the finite abelian group $M_n = (\Z/n\Z)^\times$ of units in the ring $\Z/n\Z$, or equivalently the group of reduced residue classes modulo~$n$ under multiplication. Several common arithmetic functions are related to the multiplicative group: its size is the Euler phi-function $\phi(n)$, for instance, and its exponent is the Carmichael lambda-function $\lambda(n)$. Both of these functions have been investigated for their distributional properties (as well as pointwise bounds), which reflects the fact that the multiplicative groups~$M_n$ form a natural family of groups to study, and indeed a richer family of groups than the cyclic (additive) groups~$\Z_n$ themselves.

The structure of these multiplicative groups is best described in terms of one of the two canonical forms for finite abelian groups~$G$. One canonical form is the invariant factor decomposition $G \cong \Z_{d_1} \times \cdots \times \Z_{d_k}$ where each $d_j$ divides $d_{j+1}$; indeed in this form we have $d_k = \lambda(n)$ exactly. It turns out that the length~$k$ of the invariant factor decomposition is essentially $\omega(n)$, the number of distinct prime factors of~$n$, whose distribution also has a robust theory. Chang and the first author~\cite{CM} used a refinement of the Selberg--Delange method to show that $d_1=2$ for almost all integers~$n$ and found an asymptotic formula for the number of $n\le x$ for which $d_1=d$ for any fixed integer~$d$.

In this paper we focus on the other canonical form for finite abelian groups, namely the {\em primary decomposition} $G \cong \Z_{q_1} \times \cdots \times \Z_{q_t}$ where each~$q_j$ is a prime power; this decomposition (which is also called the elementary factor decomposition) is uniquely determined by~$G$ up to the order of the primary factors~$q_j$. The result mentioned above implies that the least primary factor of $M_n$ equals~$2$ for almost all integers~$n$ (since every primary factor of an invariant factor of~$G$ is also a primary factor of~$G$ itself). More specifically, we can prove:

\begin{theorem} \label{exceptions theorem}
The number of $n\le x$ for which~$2$ is not the least primary factor of $M_n$ is
\[
C_3 \frac{x}{(\log x)^{1/2}} + O \biggl( \frac x{(\log x)^{2/3}} \biggr),
\]
where
\begin{equation} \label{C3}
C_3 = \frac3{4\sqrt2} \prod_{p \equiv 3 \mod 4}\bigg(1 - \frac{1}{p^2}\bigg)^{{1}/{2}}  \approx 0.490694.
\end{equation}
\end{theorem}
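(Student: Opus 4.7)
The plan is to first characterize, via the Chinese Remainder Theorem decomposition of $M_n$, those integers~$n$ for which $\Z_2$ does not appear as a primary factor, and then to count them by standard analytic methods.  Writing $n = 2^a m$ with $m$ odd, we have $M_n \cong M_{2^a} \times \prod_{p^b \| m} M_{p^b}$.  For odd~$p$, each $M_{p^b} \cong \Z_{p^{b-1}(p-1)}$ is cyclic, and its $2$-part is $\Z_{2^{v_2(p-1)}}$; this contributes a $\Z_2$ primary factor precisely when $p \equiv 3 \mod 4$.  Meanwhile $M_{2^a}$ is trivial for $a \le 1$, equals $\Z_2$ for $a = 2$, and equals $\Z_2 \times \Z_{2^{a-2}}$ for $a \ge 3$, so the $2$-part of~$n$ contributes a $\Z_2$ primary factor exactly when $4 \mid n$.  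Since $\Z_2$ is a primary factor of $M_n$ iff the smallest elementary divisor in the primary decomposition of the Sylow $2$-subgroup of $M_n$ equals~$2$, the characterization is: $2$ is not the least primary factor of $M_n$ iff $4 \nmid n$ and every odd prime divisor of $n$ is $\equiv 1 \mod 4$.

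Splitting these $n$ according to whether $n$ is odd or exactly twice an odd integer reduces the count to $N(x) + N(x/2)$, where
\[
N(y) := \#\{n \le y : \text{every prime factor of $n$ is $\equiv 1 \mod 4$}\}.
\]
I would evaluate $N(y)$ via the Selberg--Delange method applied to the Dirichlet series $F(s) = \prod_{p \equiv 1 \mod 4}(1-p^{-s})^{-1}$.  Using the nonprincipal character $\chi_4$ modulo~$4$ and the identity
\[
\zeta(s) L(s,\chi_4) = (1-2^{-s})^{-1} \prod_{p \equiv 1 \mod 4}(1-p^{-s})^{-2} \prod_{p \equiv 3 \mod 4}(1-p^{-2s})^{-1},
\]
one can factor $F(s) = \zeta(s)^{1/2} G(s)$, where
\[
G(s) = L(s,\chi_4)^{1/2} (1-2^{-s})^{1/2} \prod_{p \equiv 3 \mod 4}(1-p^{-2s})^{1/2}
\]
is analytic and nonvanishing in a neighborhood of the line $\Re s = 1$ (using nonvanishing of $L(s,\chi_4)$ on $\Re s = 1$ together with the standard zero-free region).

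Selberg--Delange then yields
\[
N(y) = \frac{G(1)}{\Gamma(1/2)} \cdot \frac{y}{(\log y)^{1/2}} + O\biggl(\frac{y}{(\log y)^{3/2}}\biggr),
\]
and using $L(1,\chi_4) = \pi/4$ and $\Gamma(1/2) = \sqrt\pi$, a short computation gives $G(1)/\Gamma(1/2) = \frac{1}{2\sqrt 2} \prod_{p \equiv 3 \mod 4}(1-p^{-2})^{1/2}$.  Adding $N(x) + N(x/2)$ multiplies the leading coefficient by $1 + \tfrac12 = \tfrac32$, producing exactly the constant $C_3$ in~\eqref{C3} with an error term comfortably better than the claimed $O(x/(\log x)^{2/3})$.

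The main obstacle I anticipate is the initial group-theoretic step: one must correctly translate ``$2$ is not the least primary factor of $M_n$'' into an arithmetic condition on~$n$.  It would be easy to confuse this with the condition that the Sylow $2$-subgroup of $M_n$ be trivial (too restrictive) or that its $2$-torsion be trivial (essentially never), when the correct reading is that the smallest cyclic summand of the Sylow $2$-subgroup has order at least~$4$.  Once this reduction is in hand, the analytic step is a standard application of the Selberg--Delange machinery.
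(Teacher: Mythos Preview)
Your proposal is correct and follows essentially the same route as the paper: the characterization of $S(n)\ne 2$ via the Chinese remainder theorem (the paper's Lemma~2.1), the splitting into odd and twice-odd integers (the paper's Lemma~2.3), and a Selberg--Delange count of integers with all prime factors $\equiv 1\pmod 4$ (which the paper invokes by citing~\cite[Proposition~4.1]{CM}, whereas you sketch the factorization $F(s)=\zeta(s)^{1/2}G(s)$ directly). Your error term $O(y/(\log y)^{3/2})$ is in fact sharper than the $O(y/(\log y)^{3/4})$ the paper quotes, but both comfortably suffice for the stated $O(x/(\log x)^{2/3})$.
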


The main result of this paper is to use the same refinement of the Selberg--Delange method to establish an asymptotic formula for the number of integers $n\le x$ for which the least primary factor of $M_n$ equals any fixed prime power~$q$. We introduce the following notation:

\begin{definition} \label{definition 1.1}
Let $S(n)$ denote the smallest primary factor of~$M_n$, that is, the least prime power appearing in the primary decomposition of~$M_n$. (Note that $M_1$ and $M_2$ are trivial groups; we are content to leave $S(1)$ and $S(2)$ undefined.) Also let $\E_q(x)$ denote the set of positive integers $n \leq x$ such that $S(n) = q$. 
\end{definition}

\begin{theorem} \label{main theorem}
For all prime powers~$q\le(\log x)^{1/3}$,
\[
\#\E_q(x) = \frac{C_q x}{(\log x)^{1 - \beta_q}} + O_q \biggl( \frac x{(\log x)^{1 - \beta_{q^+}}} \biggr),
\]
where the constants~$\beta_q$, $\beta_{q^+}$, and~$C_q$ are defined in Definitions~\ref{beta_q definition} and~\ref{leading constant definition}.
\end{theorem}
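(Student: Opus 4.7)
The plan is to reduce the group-theoretic condition $S(n)=q$ to a purely multiplicative condition on the prime factorization of~$n$ and then feed that condition into the refined Selberg--Delange method developed in~\cite{CM}, in the same spirit as Theorem~\ref{exceptions theorem}.

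The structural step is to read the primary decomposition of $M_n$ off from $M_n\cong\prod_{p^a\|n}M_{p^a}$. For odd~$p$ the factor $M_{p^a}$ is cyclic of order $p^{a-1}(p-1)$, so its primary factors are $p^{a-1}$ (present only when $a\ge 2$) together with $\ell^{v_\ell(p-1)}$ for each prime $\ell\mid p-1$; for $p=2$ the primary factors of $M_{2^a}$ are empty, $\{2\}$, and $\{2,\,2^{a-2}\}$ for $a=1$, $a=2$, $a\ge 3$ respectively. Reading off the smallest primary factor now makes ``$S(n)\ge q$'' equivalent to three local conditions on the prime powers dividing~$n$: (i) $4\nmid n$ when $q>2$; (ii) if $p^a\|n$ with $p$ odd and $a\ge 2$ then $p^{a-1}\ge q$; and (iii) for each odd prime $p\mid n$ and each prime $\ell<q$, if $\ell\mid p-1$ then in fact $\ell^{\mlq}\mid p-1$, where $\mlq=\lceil\log_\ell q\rceil$. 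Writing $\A_q$ for the set of odd primes satisfying~(iii), $T_q(x)$ for the number of $n\le x$ satisfying (i)--(iii), and $q^+$ for the next prime power after~$q$, the identity $\#\E_q(x)=T_q(x)-T_{q^+}(x)$ reduces Theorem~\ref{main theorem} to an asymptotic for $T_q(x)$.

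To obtain that asymptotic I would introduce the Dirichlet series $F_q(s)=\sum_{S(n)\ge q}n^{-s}$, which factors as an Euler product because conditions (i)--(iii) are multiplicative. Condition~(iii) singles out allowed residue classes modulo $N_q:=\prod_{\ell<q}\ell^{\mlq}$, so by Dirichlet's theorem $\A_q$ has a positive relative density $\beta_q$ among the primes, computable explicitly as a product of local densities over primes $\ell<q$. A standard comparison with the zeta function then yields $F_q(s)=\zeta(s)^{\beta_q}G_q(s)$ with $G_q$ holomorphic and nonzero in a neighbourhood of $\mathrm{Re}(s)\ge 1$; the analytic factor $G_q$ absorbs the modified Euler factors at primes $\ell<q$ (whose form is dictated by~(ii)) together with the convergence corrections at primes of $\A_q$.

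Feeding this into the uniform version of the Selberg--Delange method from~\cite{CM} gives $T_q(x)=C_qx/(\log x)^{1-\beta_q}+O(x/(\log x)^{2-\beta_q})$ with $C_q$ an explicit combination of $G_q(1)$ and $\Gamma(\beta_q)$. Applying the same asymptotic to $T_{q^+}(x)$ and subtracting produces the claimed main term; because $0\le\beta_q-\beta_{q^+}\le 1$ we have $(\log x)^{1-\beta_{q^+}}\le(\log x)^{2-\beta_q}$, so the subtracted $T_{q^+}$-main term dominates the Selberg--Delange error and delivers the stated error $O(x/(\log x)^{1-\beta_{q^+}})$. The main obstacle will be proving this uniformly throughout the range $q\le(\log x)^{1/3}$: the conductor $N_q$, the number of modified Euler factors, and the size of $G_q(1)$ all grow with~$q$, while $\beta_q$ drifts downward, so one needs careful control of the implied constants in the Selberg--Delange output; the cutoff $(\log x)^{1/3}$ should arise precisely from balancing these uniformity losses.
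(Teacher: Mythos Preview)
Your approach matches the paper's: reduce $S(n)\ge q$ to residue-class conditions on the prime factors of~$n$ (Proposition~\ref{prop 4.7}), apply the Selberg--Delange estimate from~\cite{CM} (Proposition~\ref{lemma 4.10}), and subtract the count at~$q^+$. Two small points deserve correction. First, your condition~(ii) is redundant: condition~(iii) already forces $p\nmid n$ for every odd prime $p\le q$, since any prime $\ell\mid p-1$ then has $\ell<q$ while $\ell^{v_\ell(p-1)}\le p-1<q$; hence the Euler factors at odd primes below~$q$ are simply~$1$, not ``modified'', and the only genuinely nonstandard local factor is at $p=2$ (which the paper disposes of via the splitting in Lemma~\ref{split Aq}). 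Second, the error term in the theorem is $O_q$, so no uniformity in~$q$ is needed---the paper's proof treats~$q$ as fixed throughout, and the hypothesis $q\le(\log x)^{1/3}$ plays no role in the argument, so your anticipated ``main obstacle'' simply does not arise. (Also note that your $\mlq=\lceil\log_\ell q\rceil$ is off by one from Definition~\ref{definition 4.3}, though your condition~(iii) itself is correct.)
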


The constant $C_q$ has an implicit definition but can be computed in principle for any~$q$; we work out the following special cases as examples.

\begin{theorem} \label{q=3,4,5 theorem}
We have
\begin{align*}
\#\E_3(x) &= C_3 \frac{x}{(\log x)^{1/2}} + O \biggl( \frac x{(\log x)^{2/3}} \biggr) \\
\#\E_4(x) &= C_4 \frac{x}{(\log x)^{2/3}} + O \biggl( \frac x{(\log x)^{5/6}} \biggr) \\
\#\E_5(x) &= C_5 \frac{x}{(\log x)^{5/6}} + O \biggl( \frac x{(\log x)^{13/15}} \biggr)
\end{align*}
where $C_3$ was defined in equation~\eqref{C3} above, and $C_4 \approx 0.4200344$ and $C_5 \approx 0.2095134$ are given by
\begin{multline} \label{C4}
C_4 = \frac{3G_{\B_4}(1)}{2\Gamma(\beta_4)} = \frac{3^{2/3}}{2\Gamma(1/3)} \prod_{\substack{\chi \ne \chi_0 \\ \chi_3=\chi_0}} L(1,\chi)^{1/3} \prod_{\substack{\chi \ne \chi_0 \\ \chi_3^2=\chi_0}} L(1,\chi)^{-1/6} \prod_{\chi \ne \chi_0} L(1,\chi)^{1/12} \\
\times \prod_{p \equiv 5,29 \mod{36}} (1-p^{-6})^{1/6} (1-p^{-2})^{-1/2} \prod_{p \equiv 7,11,23,31 \mod{36}} (1-p^{-6})^{1/6} \\
\times \prod_{p \equiv 13,25 \mod{36}} (1-p^{-3})^{1/3} \prod_{p \equiv 19,35 \mod{36}} (1-p^{-2})^{1/2}
\end{multline}
and
\begin{multline} \label{C5}
C_5 = \frac{3^{5/6}}{2\Gamma(1/6)} \prod_{\substack{\chi \ne \chi_0 \\ \chi_3=\chi_0}} L(1,\chi)^{1/6} \prod_{\substack{\chi \ne \chi_0 \\ \chi_3^2=\chi_0}} L(1,\chi)^{-1/12} \prod_{\chi \ne \chi_0} L(1,\chi)^{1/24} \\
\prod_{\substack{p \equiv 19,35,37,53, \\ 55,71 \mod {72}}} (1-p^{-2})^{1/2} \prod_{\substack{p \equiv 5,7,11,13,23,29,31,43, \\ 47,59,61,67 \mod {72}}} (1-p^{-6})^{1/6} \\
\times \prod_{p \equiv 41,65 \mod {72}} (1-p^{-6})^{1/6} (1-p^{-2})^{1/2} \prod_{p \equiv 25,49 \mod {72}} (1-p^{-3})^{1/3}.
\end{multline}
\end{theorem}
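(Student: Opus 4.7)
The strategy is to specialize Theorem~\ref{main theorem} to $q=3,4,5$ and explicitly evaluate the constants $\beta_q$, $\beta_{q^+}$, and~$C_q$ in each case. The first task is the determination of the density exponents. The condition $S(n)\ge q$ imposes a congruence condition on each odd prime $p\mid n$: every primary factor of $\Z_{p-1}$ must be~$\ge q$ (contributions from higher prime powers $p^a\|n$ do not alter the density since $p^{a-1}\ge p$ for $a\ge2$). Translating into arithmetic progressions: for $q=3$ one needs $p\equiv 1\pmod4$, of density $1/2$; for $q=4$ one additionally needs the $3$-part of $p-1$ to differ from $3$, leaving the four residue classes $\{1,5,17,29\}\pmod{36}$ of density $1/3$; for $q=5$ one further requires $p\equiv 1\pmod 8$, reducing the density to $1/6$. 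Hence $\beta_3=1/2$, $\beta_4=1/3$, $\beta_5=1/6$. Since the next available prime-power thresholds are $q^+=4,5,7$ (note $6$ is not a prime power), the same analysis gives $\beta_{q^+}=1/3,\,1/6,\,2/15$, confirming the error exponents $1-\beta_{q^+}=2/3,\,5/6,\,13/15$ in the statement.

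To evaluate the leading constants, I would write the Dirichlet series $\sum_{S(n)\ge q}n^{-s}$ as an Euler product over admissible primes and factor it as $\prod_\chi L(s,\chi)^{e_\chi}\cdot G_{\B_q}(s)$, where $\chi$ ranges over Dirichlet characters modulo the relevant conductor ($4$, $36$, or~$72$) and the rational exponents $e_\chi$ encode the character expansion of the indicator of admissible residue classes; the residual factor $G_{\B_q}(s)$ is an absolutely convergent Euler product for $\operatorname{Re} s>1/2$. The Selberg--Delange machinery then yields $C_q=G_{\B_q}(1)/\Gamma(\beta_q)$, after absorbing the local factors at the small primes (for instance, the visible factor $3/2$ in~\eqref{C4} and~\eqref{C5} is the local contribution at~$p=2$, where $n$ may carry at most one power of $2$). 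Evaluating $G_{\B_q}(1)$ explicitly and grouping its local factors by residue class modulo the conductor yields the formulas~\eqref{C3}--\eqref{C5}: for $q=3$ only the non-principal character modulo~$4$ is involved, producing the simple product over $p\equiv3\pmod4$; for $q=4,5$ the twelve (resp.\ twenty-four) Dirichlet characters produce the displayed combinations of $L(1,\chi)$ with rational exponents, together with the finite correction products over residue classes modulo $36$ and~$72$.

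The principal obstacle I anticipate is the careful bookkeeping of the character decomposition for $q=4$ and~$q=5$. One must (i)~expand the indicator of the admissible residue classes in a basis of Dirichlet characters modulo $36$ or~$72$ and extract the rational exponents~$e_\chi$; (ii)~isolate the contribution of the principal character to the pole at $s=1$, which produces the $\Gamma(\beta_q)^{-1}$ factor through Selberg--Delange; and (iii)~match the residual convergent Euler products, residue class by residue class, against the displayed products $(1-p^{-k})^{c}$ in~\eqref{C4}--\eqref{C5}, taking care that each fractional $L(1,\chi)$ power carries the correct branch. The case $q=5$ is the computational peak, involving twelve distinct residue classes modulo~$72$ and twelve parametrized character products; verifying the final formula amounts to a systematic but substantial algebraic organization, most safely carried out by tracking the logarithmic Euler expansion prime-power by prime-power.
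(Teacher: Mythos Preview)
Your proposal is correct and follows essentially the same approach as the paper: specialize Theorem~\ref{main theorem}, compute $\beta_q$ and $\beta_{q^+}$ from the admissible residue classes, and evaluate $C_q$ by decomposing the relevant Euler product via Dirichlet characters modulo $4$, $36$, and $72$. The paper organizes the bookkeeping you anticipate through two explicit lemmas---one computing $\sum_{b\in\B_{\ell,q}}\chi(b)$ in closed form and one collapsing $\prod_{\chi\in H}(1-\chi(p)p^{-s})$ over a character subgroup~$H$---and keeps the factor $3/2$ outside of $G_{\B_q}(1)$ (so its definition is $C_q=\tfrac{3}{2}G_{\B_q}(1)/\Gamma(\beta_q)$ rather than your absorbed version), but these are packaging differences rather than a different method.
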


\begin{remark}
The notation $\chi_3$ appearing in the expressions for~$C_4$ and~$C_5$ will be explained more fully in Remark~\ref{Bq Blq remark} below. For now, it suffices to say that~$\chi$ in the expression for~$C_4$ runs over all nonprincipal Dirichlet characters modulo~$36$, and any such character restricted to the residue class $1\mod4$ yields a Dirichlet character modulo~$9$; it is this latter character that~$\chi_3$ refers to. In the expression for~$C_5$, the same convention holds with~$36$ replaced by~$72$ and with $1\mod4$ replaced by $1\mod8$.
\end{remark}

Throughout this paper, $p$ and $\ell$ will always refer to primes, while $q$ will refer to a prime power (possibly itself prime). All $O$-constants may depend on~$q$. In Section~\ref{init section} we prove Theorem~\ref{exceptions theorem}; the argument also serves as a foundation for the subsequent, more involved proof of Theorem~\ref{main theorem}. In Section~\ref{convert section} we characterize integers~$n$ for which $M_n$ has no elementary factors less than~$q$ in terms of the factorization of~$n$, and then deduce Theorem~\ref{main theorem} from that characterization using the Selberg--Delange method. The rest of the paper is devoted to demonstrating how to calculate the leading constants in Theorem~\ref{main theorem} explicitly in a form that allows for numerical approximation; the overall plan for doing so is given in Section~\ref{plan section}, and the evaluation of~$C_3$ is carried out in Section~\ref{C3 section}. After evaluating some subsidiary sums and products involving Dirichlet characters in Section~\ref{char section}, we complete the proof of Theorem~\ref{q=3,4,5 theorem} by evaluating~$C_4$ in Section~\ref{C4 section} and~$C_5$ in Section~\ref{C5 section}.

\section{Counting integers $n$ with least primary factor not equal to~$2$} \label{init section}

In this section we prove Theorem~\ref{exceptions theorem}; the argument also serves as a foundation for the more involved proof of Theorem~\ref{main theorem} in the next section.

We rely in an essential way on the fact that the primary decomposition of a direct product $G = \prod_{j=1}^k G_k$ of finite abelian groups is simply the concatenation of the primary decompositions of the individual groups~$G_j$. In particular, the least primary factor of~$G$ is equal to the minimum of the least primary factors of the~$G_j$. This observation gives a particularly simple criterion for detecting the smallest possible primary factor~$2$. Recall that $S(n)$ denotes the smallest primary factor of the multiplicative group~$M_n$. We also write $p^r\|n$ to mean $p^r\mid n$ but $p^{r+1}\nmid n$.

\begin{lemma} \label{lemma 3.1}
We have $S(n) \ne 2$ if and only if both $4 \nmid n$ and all odd primes dividing~$n$ are congruent to $1\mod 4$.
\end{lemma}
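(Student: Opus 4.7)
The plan is to use the Chinese Remainder Theorem to decompose $M_n \cong \prod_{p^r \| n} M_{p^r}$, and then invoke the observation made just before the lemma: since the primary decomposition of a direct product is the concatenation of the primary decompositions of its factors, $2$ appears as a primary factor of $M_n$ if and only if $2$ appears as a primary factor of $M_{p^r}$ for some $p^r\|n$. Because $2$ is the smallest possible prime power, this is equivalent to $S(n)=2$. So the task reduces to determining for which prime powers $p^r$ the local factor $M_{p^r}$ admits $\Z_2$ as a primary factor.

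For odd primes $p$, the group $M_{p^r}$ is cyclic of order $\phi(p^r)=p^{r-1}(p-1)$, and a cyclic group $\Z_m$ has $\Z_2$ as a primary factor precisely when $2\|m$. Since $p^{r-1}$ is odd, this amounts to $2\|(p-1)$, which in turn is equivalent to $p\equiv 3\pmod 4$. For $p=2$, a direct appeal to the well-known structure gives $M_2$ trivial, $M_4\cong\Z_2$, and $M_{2^r}\cong\Z_2\times\Z_{2^{r-2}}$ for $r\ge 3$, so $M_{2^r}$ contains a $\Z_2$ primary factor exactly when $r\ge 2$, i.e., when $4\mid n$.

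Assembling the two cases, $S(n)=2$ holds if and only if either $4\mid n$ or some prime $p\equiv 3\pmod 4$ divides $n$; taking the contrapositive (and noting that an odd prime is either $1$ or $3$ mod~$4$) yields exactly the statement of the lemma. No step here presents a real obstacle — the argument is essentially a bookkeeping exercise once one has the structure of each $M_{p^r}$ in hand. The only place that requires care is the case $p=2$, where one must remember that $M_{2^r}$ is noncyclic for $r\ge 3$ in order to avoid incorrectly applying the odd-prime analysis and mishandling the $4\mid n$ condition.
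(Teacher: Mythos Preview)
Your proof is correct and follows essentially the same approach as the paper: decompose $M_n$ via the Chinese remainder theorem, analyze each local factor $M_{p^r}$ using the known structure of these groups, and determine for which prime powers $\Z_2$ appears as a primary factor. The only cosmetic difference is that you phrase the odd-prime case as ``$2\|(p-1)$'' while the paper says ``the smallest prime power dividing $p-1$ equals $2$,'' but these are the same condition.
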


\begin{proof}
If the prime-power factorization of~$n$ is $n = \prod_{p^r \mid \mid n} p^r$, the Chinese remainder theorem tells us that $M_n \cong \prod_{p^r \| n} M_{p^r}$; in particular, $S(n)=2$ if and only if $S(p^r)=2$ for some prime power $p^r\| n$.

When~$p$ is odd, classical results on primitive roots imply that $M_{p^r} \cong \Z_{\phi(p^r)} \cong \Z_{p^{r-1}} \oplus \Z_{p-1}$; the least primary factor of this direct sum is the least primary factor of $\Z_{p-1}$, which is the smallest prime power dividing $p-1$. In particular, its least primary factor equals~$2$ precisely when $p\equiv3\mod 4$.

On the other hand, $M_2$ is the trivial group, while $M_{2^r} \cong \Z_2 \oplus \Z_{2^{r-2}}$ when $r\ge2$. In particular, the least primary factor of $M_{2^r}$ equals~$2$ as soon as $r\ge2$.
\end{proof}

As we carry out the proof of Theorem~\ref{exceptions theorem}, we introduce notation that will foreshadow more general notation in the next section.

\begin{definition} \label{definition 3.2}
Let $\A_3(x)$ be the set of all positive integers~$n$ not exceeding~$x$ such that $S(n) \ge 3$. Let $\A'_3(x)$ be the set of all odd integers in $\A_3(x)$.
\end{definition}

Note that $\{n\le x\colon n \notin \E_2(x)\} = \{n\le x\colon S(n) \ne 2\} = \{n\le x\colon S(n)\ge 3\} = \A_3(x)$, and so Theorem~\ref{exceptions theorem} is a claim about an asymptotic formula for $\#\A_3(x)$, which we will deduce from an asymptotic formula for $\#\A'_3(x)$.

\begin{lemma} \label{split A3}
We have $\#\A_3(x) = \#\A'_3(x) + \#\A'_3(\frac{x}{2})$.
\end{lemma}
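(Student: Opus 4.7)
The plan is to split $\A_3(x)$ into its odd and even parts and identify each with something we can count. The odd part is, by definition, exactly $\A'_3(x)$, so the task reduces to showing that the even part is in bijection with $\A'_3(x/2)$ via the obvious map $m \mapsto 2m$.

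First I would fix any even $n \in \A_3(x)$ and apply Lemma~\ref{lemma 3.1}: since $S(n) \ne 2$, we must have $4 \nmid n$, so $n = 2m$ with $m$ odd and $m \le x/2$; moreover, every odd prime dividing $n$ (equivalently, every prime dividing $m$) is congruent to $1 \mod 4$. Applying Lemma~\ref{lemma 3.1} in the other direction to $m$ itself (noting $m$ is odd so $4 \nmid m$ is automatic, and the condition on odd prime divisors of $m$ is inherited from $n$), we conclude $S(m) \ne 2$, hence $m \in \A'_3(x/2)$.

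Conversely, given $m \in \A'_3(x/2)$, I would set $n = 2m \le x$. Since $m$ is odd, $n$ is even but $4 \nmid n$; and by Lemma~\ref{lemma 3.1} applied to $m$, every odd prime dividing $m$ (hence every odd prime dividing $n$) is $\equiv 1 \mod 4$. A second application of Lemma~\ref{lemma 3.1}, this time to $n$, yields $S(n) \ne 2$, so $n \in \A_3(x)$ and $n$ is even.

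These two directions exhibit an explicit bijection between $\A'_3(x/2)$ and the even elements of $\A_3(x)$, so the count of even elements of $\A_3(x)$ equals $\#\A'_3(x/2)$, and combining with the odd part yields the claimed identity. I expect no real obstacle: the only subtlety is remembering to apply Lemma~\ref{lemma 3.1} in both directions (once to $n$ to force the shape $n = 2m$, and once to $m$ to verify $m \in \A'_3(x/2)$), which is immediate from the statement of the lemma.
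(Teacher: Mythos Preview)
Your proposal is correct and follows essentially the same approach as the paper's own proof: split $\A_3(x)$ into odd and even elements, identify the odd part with $\A'_3(x)$, and use Lemma~\ref{lemma 3.1} (specifically $4\nmid n$) to show the even part is in bijection with $\A'_3(x/2)$ via $n=2m$. Your version is simply more explicit about verifying both directions of the bijection, whereas the paper states only the forward direction and leaves the converse implicit.
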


\begin{proof}
The odd elements of $\A_3(x)$ are counted by $\#\A'_3(x)$ by definition. On the other hand, every even $n\in\A_3(x)$ is of the form $n=2m$ where~$m \le \frac x2$ is odd, since $4\nmid n$ by Lemma~\ref{lemma 3.1}; therefore the even elements of $\A_3(x)$ are counted by $\#\A'_3(\frac x2)$.
\end{proof}

Note that $\A_3'(x) = \{n \leq x\colon p\mid n \implies p\equiv1\mod 4\}$ by Lemma~\ref{lemma 3.1}. Counting integers with this sort of restriction on their prime factors is a classical problem in analytic number theory (and indeed this particular case is almost the same as the seminal study by Landau ~\cite{Landau} of the counting function of sums of two squares).

\begin{lemma} \label{A3 asymptotic}
We have $\displaystyle\#\A'_3(x) = \frac{g_4x}{(\log x)^{1/2}} + O\bigg(\frac{x}{(\log x)^{3/4}}\bigg)$, where
\[
g_4 = \frac1{2\sqrt2} \prod_{p \equiv 3 \mod 4}\bigg(1 - \frac{1}{p^2}\bigg)^{{1}/{2}} \approx 0.32713.
\]
\end{lemma}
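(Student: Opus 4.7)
The plan is to apply the Selberg--Delange method to the Dirichlet series
\[
F(s) := \sum_{n\in\A'_3} n^{-s} = \prod_{p\equiv 1\pmod 4}\Big(1-\frac1{p^s}\Big)^{-1},
\]
whose coefficients are the indicator function of the set described (after dropping the cutoff at~$x$), thanks to Lemma~\ref{lemma 3.1}. The outline is standard: factor $F(s)$ as $\zeta(s)^{1/2}$ times a function~$H(s)$ that is holomorphic and bounded in a region extending slightly left of $\Re(s)=1$, then quote a Selberg--Delange-type theorem to extract the main term $H(1)x/(\Gamma(1/2)(\log x)^{1/2})$.

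The factorization step is the concrete input. Letting $\chi$ denote the nonprincipal character modulo~$4$ and $\chi_0$ the principal character modulo~$4$, the identity $\tfrac12(1+\chi(p))=\mathbf{1}_{p\equiv 1\,(4)}$ on odd primes gives, on comparing Euler factors one prime at a time,
\[
L(s,\chi_0)\,L(s,\chi) = F(s)^2 \prod_{p\equiv 3\pmod 4}(1-p^{-2s})^{-1},
\]
so that $F(s)=\zeta(s)^{1/2}H(s)$ with
\[
H(s) := (1-2^{-s})^{1/2}\,L(s,\chi)^{1/2}\prod_{p\equiv 3\pmod 4}(1-p^{-2s})^{1/2}.
\]
Since $L(s,\chi)$ is entire and nonvanishing on~$\Re(s)\ge 1$ and the product over $p\equiv 3\pmod 4$ converges absolutely for $\Re(s)>\tfrac12$, the function $H$ extends holomorphically and is polynomially bounded in a standard zero-free region for~$\zeta$.

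I would then invoke a Selberg--Delange theorem of the shape used in~\cite{CM} (essentially Tenenbaum, Theorem~II.5.2, with the complex parameter $\alpha=1/2$) to deduce
\[
\#\A'_3(x) = \frac{H(1)}{\Gamma(1/2)}\cdot\frac{x}{(\log x)^{1/2}} + O\!\left(\frac{x}{(\log x)^{3/4}}\right).
\]
Evaluating the constant using $L(1,\chi)=\pi/4$ and $\Gamma(1/2)=\sqrt{\pi}$ gives
\[
\frac{H(1)}{\Gamma(1/2)} = \frac{1}{\sqrt{\pi}}\cdot\frac{1}{\sqrt 2}\cdot\frac{\sqrt{\pi}}{2}\prod_{p\equiv 3\pmod 4}(1-p^{-2})^{1/2} = \frac1{2\sqrt 2}\prod_{p\equiv 3\pmod 4}(1-p^{-2})^{1/2} = g_4,
\]
as required.

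The main obstacle is not really the Euler-product bookkeeping, which is routine, but rather the verification that $H(s)$ meets the analytic hypotheses of whichever Selberg--Delange statement one cites: holomorphy and a mild growth bound on a region of the form $\{\Re(s)>1-c/\log(|\Im(s)|+2)\}$. This is a consequence of the classical zero-free region for $\zeta(s)$, the well-known nonvanishing and tempered growth of $L(s,\chi)$ on and near $\Re(s)=1$, and the absolute convergence of the square-root product over $p\equiv 3\pmod 4$ in the half-plane $\Re(s)>\tfrac12$. (The method in fact allows a sharper error term than $(\log x)^{-3/4}$, but this is more than sufficient for the subsequent deduction of Theorem~\ref{exceptions theorem} via Lemma~\ref{split A3}.)
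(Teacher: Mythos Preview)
Your proof is correct and follows essentially the same approach as the paper. The paper's proof simply quotes \cite[Proposition~4.1]{CM} for the general counting function $\#\{n\le x : p\mid n \Rightarrow p\equiv 1\pmod q\}$ and plugs in $q=4$, then simplifies the constant~$g_4$; that proposition is itself proved by exactly the Selberg--Delange argument you reconstruct here, with the same factorization $F(s)=\zeta(s)^{1/2}H(s)$ and the same evaluation of $H(1)$ via $L(1,\chi_{-4})=\pi/4$. So you have unpacked the black box rather than taken a different route.
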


\begin{proof}
The paper of Chang and first author~\cite[Proposition~4.1]{CM} contains the asymptotic formula
\[
\#\{n \leq x\colon p\mid n \implies p\equiv1\mod q\} = \frac{g_qx}{(\log x)^{1 - {1}/{\phi(q)}}} + O\bigg(\frac{x}{(\log x)^{{5}/{4} - {1}/{\phi(q)}}}\bigg)
\]
for any even positive integer $q \geq 3$, where
\begin{equation} \label{G example}
g_q = \frac{1}{\Gamma({1}/{\phi(q)})}\bigg(\frac{\phi(q)}{q}{\prod_{\substack{\chi\mod q \\ \chi \ne \chi_0}}L(1,\chi)}\bigg)^{{1}/{\phi(q)}}{\prod_{\substack{p \nmid q \\ p \not \equiv 1 \mod q}}} \bigg( 1 - \frac{1}{p^{\ord_q(p)}} \bigg)^{{1}/{\ord_q(p)}}.
\end{equation}
Here $\ord_q(a)$ is the order of~$a$ in the multiplicative group $M_q$, and $\Gamma(s)$ and $L(s,\chi)$ are the Euler Gamma function and Dirichlet $L$-functions, respectively. The simplification of the constant~$g_4$ was carried out after Theorem~1.2 of that paper:
\begin{align}
g_4 &= \frac{1}{\Gamma({1}/{2})}\bigg(\frac{2}{4}L(1, \chi_{-4})\bigg)^{{1}/{2}}{\prod_{p \equiv 3 \mod 4}\bigg(1 - \frac{1}{p^2}\bigg)^{{1}/{2}}} \notag \\
&=\frac{1}{\sqrt{\pi}}\Big(\frac{\pi}{8}\Big)^{{1}/{2}} \prod_{p \equiv 3 \mod 4}\bigg(1 - \frac{1}{p^2}\bigg)^{{1}/{2}}  \label{convergent Euler}
\end{align}
as claimed.
\end{proof}

With these results in hand, the proof of Theorem~\ref{exceptions theorem} is a simple calculation:

\begin{proof}[Proof of Theorem~\ref{exceptions theorem}]
By Lemmas~\ref{split A3} and~\ref{A3 asymptotic},
\begin{align*}
\#\A_3(x) &= \#\A'_3(x) + \#\A'_3(\tfrac{x}{2}) \\
&= \bigg(\frac{g_4x}{(\log{x})^{1 / 2}} + O\bigg(\frac{x}{(\log x)^{3/4}}\bigg)\bigg) + \bigg(\frac{g_4x/2}{(\log{x/2})^{1 / 2}} + O\bigg(\frac{x/2}{(\log x/2)^{3/4}}\bigg)\bigg)\\
&= g_4x\bigg(\frac{1}{(\log x)^{1/2}} + \frac{1}{2(\log ({x}/{2}))^{{1}/{2}}}\bigg) + O\bigg(\frac{x}{(\log x)^{{3}/{4}}}\bigg) \\
&= g_4x\bigg(\frac{1}{(\log x)^{1/2}} + \frac{1}{2(\log x)^{{1}/{2}}} \bigg\{ 1 + O\bigg( \frac1{\log x} \bigg) \bigg\} \bigg) + O\bigg(\frac{x}{(\log x)^{{3}/{4}}}\bigg) \\
&= g_4x\bigg(\frac{3/2}{(\log x)^{1/2}}\bigg) + O\bigg(\frac{x}{(\log x)^{{3}/{4}}}\bigg),
\end{align*}
which is the desired asymptotic formula since $C_3 = \frac32g_4$.
\end{proof}

\section{Counting integers $n$ with least primary factor equal to~$q$} \label{convert section}

Recall that $\E_q(x)$ from Definition~\ref{definition 1.1} is comprised of integers~$m$ such that the smallest primary factor $S(m)$ is exactly equal to~$q$, while $\A_q(x)$ from Definition~\ref{definition 3.2} is comprised of integers~$m$ such that $S(m)$ is at least~$q$.
In this section we prove Theorem~\ref{main theorem}, the asymptotic formula for $\#\E_q(x)$, using an elaboration on the method for finding an asymptotic formula for $\#\A_3(x)$ in the previous section.

\begin{definition} \label{q plus definition}
Given a prime power $q$, let $q^+$ denote the smallest prime power that is greater than~$q$.
Note that
\begin{align} \notag
\#\E_q(x) &= \{ m \leq x \colon S(m) = q \} \\
&= \{ m \leq x \colon S(m) \ge q \} - \{ m \leq x \colon S(m) \ge q^+ \} = \#\A_q(x) - \#\A_{q^+}(x). \label{E=A-A}
\end{align}
Thus to obtain an asymptotic formula for $\#\E_q(x)$ for all prime powers~$q$, it suffices to obtain an asymptotic formula for $\#\A_q(x)$ for all prime powers~$q$. This observation is helpful because, as it turns out, the property $S(m) \ge q$ is easier to characterize than the property $S(m) = q$ (see Proposition~\ref{prop 4.7} below).
\end{definition}

\begin{definition} \label{definition 4.3}
Given a prime power $q$ and a prime $p<q$, let $\mpq$ denote the largest positive integer such that $p^{\mpq} < q$, so that $\mpq = \lceil (\log q) / \log p \rceil - 1$.
\end{definition}

\begin{definition} \label{definition 4.4}
For any prime~$p$, let $v_p(n)$ denote the multiplicity with which~$p$ divides~$n$. For example, $v_2(40) = 3$ and $v_3(21)=1$ and $v_3(5) = 0$.
\end{definition}

\begin{lemma} \label{buildup lemma 1}
Fix primes $\ell$ and $p$ and a prime power~$q\ge3$. There exists $s\in\N$ such that $\ell^s<q$ and $\ell^s\|(p-1)$ if and only if $p\equiv1\mod\ell$ and $p\not\equiv1\mod{\ell^{m(\ell,q)+1}}$.
\end{lemma}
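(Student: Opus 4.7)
\emph{Proof plan.} The equivalence is essentially a definitional unpacking: both sides describe the statement $1 \le v_\ell(p-1) \le \mlq$ in slightly different language. My plan is to prove both directions by taking $s = v_\ell(p-1)$ and chasing the defining property of $\mlq$ (Definition~\ref{definition 4.3}) as the largest positive integer with $\ell^{\mlq} < q$.

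For the ``only if'' direction, I would start from an $s \in \N$ with $\ell^s < q$ and $\ell^s \| (p-1)$. The divisibility $\ell \mid \ell^s \mid (p-1)$ (using $s \ge 1$) immediately yields $p \equiv 1 \pmod \ell$. For the second congruence, the maximality in the definition of $\mlq$ forces $s \le \mlq$ from $\ell^s < q$, so $v_\ell(p-1) = s < \mlq + 1$, which is exactly $p \not\equiv 1 \pmod{\ell^{\mlq+1}}$.

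For the ``if'' direction, I would set $s := v_\ell(p-1)$. The hypothesis $p \equiv 1 \pmod \ell$ gives $s \ge 1$ (so $s \in \N$), and $p \not\equiv 1 \pmod{\ell^{\mlq+1}}$ gives $s \le \mlq$; hence $\ell^s \le \ell^{\mlq} < q$ by Definition~\ref{definition 4.3}, while $\ell^s \| (p-1)$ holds by the choice of $s$.

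I do not anticipate any real obstacle; this is a definition chase. The one thing to be mindful of is the implicit assumption $\ell < q$, without which $\mlq$ is undefined. Under that assumption, the corner case $p = 2$ is harmless (both sides are false: $\ell^s \| 1$ has no solution with $s \ge 1$, and $\ell \mid (p-1) = 1$ is impossible), and in all other cases the dictionary $s \leftrightarrow v_\ell(p-1)$ makes the two formulations equivalent.
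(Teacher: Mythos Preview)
Your proposal is correct and follows essentially the same approach as the paper: both arguments set $s = v_\ell(p-1)$ and reduce the equivalence to the inequality $1 \le v_\ell(p-1) \le \mlq$ via the defining property of $\mlq$. The only cosmetic difference is that the paper organizes this as a trichotomy on the value of $v_\ell(p-1)$ (the cases $s=0$, $s>\mlq$, and $1\le s\le \mlq$) rather than proving the two implications separately.
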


\begin{proof}
Choose $s=v_\ell(p-1)$, the only possibility for satisfying $\ell^s\|(p-1)$. If $s=0$ then the equivalence holds because both $s\notin\N$ and $p\not\equiv1\mod\ell$. If $s>m(\ell,q)$ then the equivalence holds as well because both $\ell^s\ge q$ and $p\equiv1\mod{\ell^{m(\ell,q)+1}}$. Otherwise $1\le s\le m(\ell,q)$, and the equivalence holds because all statements are true in this case.
\end{proof}

\begin{lemma} \label{buildup lemma 2}
Fix an odd prime power~$p^k$ and a prime power $q\ge3$. We have $S(p^k)\ge q$ if and only if for every prime $\ell<q$, either $p\not\equiv1\mod\ell$ or $p\equiv1\mod{\ell^{m(\ell,q)+1}}$.
\end{lemma}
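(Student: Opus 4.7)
The plan is to unpack $S(p^k)\ge q$ in terms of the explicit primary decomposition of $M_{p^k}$ and then reduce the resulting arithmetic condition to Lemma~\ref{buildup lemma 1}.

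First, I recall from the proof of Lemma~\ref{lemma 3.1} that for odd $p$ we have $M_{p^k}\cong\Z_{p^{k-1}}\oplus\Z_{p-1}$; since $\gcd(p^{k-1},p-1)=1$, the primary factors of $M_{p^k}$ are precisely the $p$-primary factor $p^{k-1}$ (present only when $k\ge2$) together with $\ell^{v_\ell(p-1)}$ as $\ell$ ranges over the prime divisors of $p-1$. Therefore $S(p^k)\ge q$ is equivalent to the conjunction of (i)~$p^{k-1}\ge q$ whenever $k\ge2$, and (ii)~$\ell^{v_\ell(p-1)}\ge q$ for every prime $\ell\mid(p-1)$.

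For condition~(ii), any prime $\ell\ge q$ dividing $p-1$ already satisfies $\ell^{v_\ell(p-1)}\ge\ell\ge q$, so only primes $\ell<q$ impose a genuine constraint. For such an $\ell$, the disjunction ``$\ell\nmid(p-1)$ or $\ell^{v_\ell(p-1)}\ge q$'' is exactly the negation of the left-hand side of Lemma~\ref{buildup lemma 1}, which by the contrapositive of that lemma is equivalent to ``$p\not\equiv1\mod\ell$ or $p\equiv1\mod{\ell^{m(\ell,q)+1}}$''---precisely the condition in the statement.

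The delicate step is~(i), since the lemma says nothing directly about the $p$-primary factor: the condition at $\ell=p$ is vacuous because $p\not\equiv1\mod p$ holds automatically. I expect the main (and only) subtlety worth flagging to be that the $\ell=2$ instance already does this work implicitly. Since $p$ is odd, $2\mid p-1$ and hence $v_2(p-1)\ge1$; if we had $p<q$, then $2^{v_2(p-1)}\le p-1<q$, forcing both $p\equiv1\mod2$ and $p\not\equiv1\mod{2^{m(2,q)+1}}$, which violates the $\ell=2$ instance of our condition. Consequently, any $p$ satisfying the conditions for all primes $\ell<q$ must itself satisfy $p\ge q$, and then $p^{k-1}\ge p\ge q$ whenever $k\ge2$, so~(i) holds automatically and the equivalence follows.
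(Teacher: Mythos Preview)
Your proof is correct and follows essentially the same route as the paper: both unwind $S(p^k)$ via the cyclic structure of $M_{p^k}$ and reduce to the contrapositive of Lemma~\ref{buildup lemma 1}. Your treatment of the $p$-primary factor $p^{k-1}$ is actually more careful than the paper's---you explicitly show via the $\ell=2$ instance that the conditions force $p\ge q$, whereas the paper simply asserts that the exponent~$k$ plays no role (relying implicitly on $2^{v_2(p-1)}\le p-1<p^{k-1}$, so that $p^{k-1}$ is never the least primary factor).
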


\begin{proof}
By Lemma~\ref{buildup lemma 1}, it suffices to show that $S(p^k)<q$ if and only if there exists a prime~$\ell$ and an integer $s\in\N$ such that $\ell^s<q$ and $\ell^s\|(p-1)$. But this is clear because $S(p^k)$ is the smallest prime power $\ell^j$ such that $\ell^j\|(p-1)$, since $M_{p^k}$ is cyclic of order $p^{k-1}(p-1)$. (We remark that the exponent~$k$ plays no role, as the second half of the claimed equivalence indicates.)
\end{proof}

\begin{proposition} \label{prop 4.7}
Let $q\ge3$ be a prime power and $m\ge3$ an integer. We have $S(m) \geq q$ if and only if:
\begin{enumerate}
    \item $4 \nmid m$;
    \item if $p\le q$ is an odd prime, then $p\nmid m$; and
    \item if $\ell<q$ is a prime and $p\mid m$ is an odd prime, then either $p \not \equiv 1 \mod \ell$ or $p \equiv 1\mod{\ell^{m(\ell,q) + 1}}$.
\end{enumerate}
\end{proposition}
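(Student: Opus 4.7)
The plan is to invoke the Chinese Remainder Theorem decomposition $M_m \cong \prod_{p^r \| m} M_{p^r}$ already used in the proof of Lemma~\ref{lemma 3.1}. Since primary decompositions concatenate under direct products, this reduces the proposition to analyzing when $S(p^r) \ge q$ holds separately for each $p^r \| m$ with $M_{p^r}$ nontrivial. I would then split into the contribution of $p=2$ and the contribution of the odd primes dividing $m$.

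The $p=2$ part is immediate from the structure recalled in Lemma~\ref{lemma 3.1}: $M_2$ is trivial, while for $r \ge 2$ the factor $\Z_2$ in $M_{2^r}\cong\Z_2\oplus\Z_{2^{r-2}}$ forces $S(2^r) = 2 < q$ (since $q\ge3$). Thus the $2$-part condition is precisely $v_2(m) \le 1$, i.e.\ condition~(a). For each odd prime $p \mid m$, Lemma~\ref{buildup lemma 2} supplies exactly the characterization of $S(p^r) \ge q$ appearing as~(c); quantifying over all such $p$ yields~(c) in full. Combining these, I obtain $S(m) \ge q \iff \text{(a) and (c)}$, so it suffices to show that~(b) is already implied by~(c).

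That implication is the one genuine verification to make. Suppose toward a contradiction that some odd prime $p \le q$ divided~$m$. Applying~(c) with $\ell = 2$ (legitimate because $q \ge 3$) and using $p \equiv 1 \mod 2$ would force $p \equiv 1 \mod{2^{m(2,q)+1}}$. But by the very definition of $m(2,q)$ we have $2^{m(2,q)+1} \ge q \ge p > p-1$, so $2^{m(2,q)+1}\nmid(p-1)$, a contradiction. Hence~(c) already excludes such~$p$ from dividing $m$, giving~(b) for free.

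I do not foresee any serious obstacle here: the argument is essentially assembly of the two buildup lemmas together with the CRT reduction. The only mildly subtle points are the correct bookkeeping of the trivial factors $M_1$ and $M_2$ in the CRT decomposition (which contribute no primary factors and so impose no constraint beyond~(a)) and the observation that~(b) is strictly speaking redundant given~(c); presumably the proposition records~(b) as a separate clause because it is in precisely the form convenient for the counting arguments that follow.
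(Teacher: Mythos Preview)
Your proposal is correct and follows essentially the same approach as the paper: reduce via CRT to the prime-power factors, handle $p=2$ using the explicit structure of $M_{2^r}$, and invoke Lemma~\ref{buildup lemma 2} for odd~$p$. The only cosmetic difference is that the paper verifies~(b) directly via $S(p)\le p-1<q$ for $3\le p\le q$, whereas you deduce~(b) from~(c) using the specific choice $\ell=2$; the paper's parenthetical remark before the proof likewise observes that~(b) is a special case of~(c), so your route is entirely in keeping with its spirit.
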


\noindent
(Technically part~(b) is a special case of part~(c), since there is always some prime divisor~$\ell$ of $p-1$ and this~$\ell$ will automatically be less than~$q$ if $p\le q$. But it will be helpful to record part~(b) explicitly.)

\begin{proof}
Since $M_m \cong \prod_{p^k\|m} M_{p^k}$ by the Chinese remainder theorem, we see that $S(m)$ is the minimum value of $S(p^k)$ over all $p^k\|m$; in particular, $S(m) \ge q$ if and only if $S(p^k) \ge q$ for all $p^k\|m$. We check the implications of this inequality separately for $p=2$, for $3\le p\le q$, and for $p>q$.
\begin{enumerate}
\item If $2^k\|m$ with $k\ge2$, then $M(2^k) \cong \Z_2\times\Z_{2^{k-2}}$ and thus $S(2^k)=2<q$. Otherwise $p=2$ does not contribute to $M(m)$.
\item If $p\mid m$ for some $3\le p\le q$, then $S(p) \le p-1 < q$, which makes $S(m) \ge q$ impossible.
\item This last item follows directly from Lemma~\ref{buildup lemma 2}.
\qedhere
\end{enumerate}
\end{proof}

\begin{remark} \label{counting residue classes}
Recall that $\mlq$ was defined in Definition~\ref{definition 4.3}. Proposition~\ref{prop 4.7}(c) says that for each prime $\ell<q$, each odd prime $p\mid m$ belongs to one of $(\ell - 2)\ell^{\mlq} + 1$ reduced residue classes modulo $\ell^{\mlq + 1}$: one of those residue classes is $1\mod{\ell^{\mlq} + 1}$, and the other $(\ell - 2)p^{\mlq}$ residue classes modulo $\ell^{\mlq + 1}$ are those congruent to neither~$0$ nor~$1\mod\ell$. (We write these residue classes explicitly in Definition~\ref{Bpq def} below.)
\end{remark}



\begin{definition} \label{beta_q definition}
Given a prime power $q\ge3$, define
\[
Q_q = \prod_{\ell < q} \ell^{\mlq + 1}
\quad\text{and}\quad
B_q = \prod_{\ell < q} \bigl( (\ell-2)\ell^{\mlq} + 1 \bigr),
\]
and define $\B_q$ to be the union of~$B_q$ reduced residue classes modulo~$Q_q$, namely those corresponding to the $(\ell-2)\ell^{\mlq} + 1$ residue classes modulo $\ell^{\mlq + 1}$ described in Remark~\ref{counting residue classes} for each $\ell<q$. Further define
\begin{equation} \label{beta_q definition equation}
\beta_q = \frac{B_q}{\phi(Q_q)} = {\prod_{\ell < q} \biggl( \frac{\ell - 2}{\ell - 1} + \frac{1}{\ell^{\mlq}(\ell-1)}} \biggr).
\end{equation}
Recalling from Definition~\ref{q plus definition} that $q^+$ is the next prime power after~$q$, we see thar
\[
\beta_{q^+} = {\prod_{\ell \le q} \biggl( \frac{\ell - 2}{\ell - 1} + \frac{1}{\ell^{\mlq}(\ell-1)}} \biggr).
\]
\end{definition}

\begin{remark} \label{beta q plus}
If~$q$ is prime then $\beta_{q^+} = \frac{q-1}q \beta_q$; if $q=\ell^m$ with $m\ge2$ then
\[
\beta_{q^+} = \frac{\ell-2+\ell^{-m}}{\ell-2+\ell^{1-m}} \beta_q.
\]
Either way, we see that $\beta_{q^+} < \beta_q$.
\end{remark}

\begin{definition} \label{notation 2.1}
Let $Q \geq 3$ be an integer. Let $\B$ be the union of $B$ distinct reduced residue classes\mod Q, and set $\beta = {B}/{\phi(Q)}$. Let $\cN_\B = \{n \in \N \colon p \mid n \Rightarrow p \in \B\}$ be the set of all positive integers~$n$ whose prime factors all lie in~$\B$. We define the associated Dirichlet series:
\[
F_\B(s) = \sum_{n \in \cN_\B} \frac{1}{n^s} = \prod_{p \in \B} \frac{1}{1 - p^{-s}},
\]
which converges absolutely when $\Re(s) > 1$. If we define
\[
G_\B(s) = F_\B(s)  \zeta(s)^{-\beta},
\]
it turns out that $G_\B(s)$ has a removable singularity at $s=1$, and thus $G_\B(1)$ is well defined.
\end{definition}

Recall that  $\A'_q(x)$ from Definition~\ref{definition 3.2} is comprised of odd integers~$m$ such that $S(m)$ is at least~$q$.

\begin{proposition} \label{lemma 4.10}
For any prime power $q\ge3$,
\[
\#\A'_q(x) = \frac{x}{{(\log x)}^{1 - \beta_q}} \biggl( \frac{G_{\B_q}(1)}{\Gamma(\beta_q)} + O((\log x)^{-{1}/{4}}) \biggr).
\]
\end{proposition}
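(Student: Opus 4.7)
My plan is to translate the condition $S(m) \ge q$ into a prime-factorization restriction and then apply the Selberg--Delange method. First, I claim that for odd $m$, the three conditions of Proposition~\ref{prop 4.7} are equivalent to requiring every prime divisor of $m$ to lie in $\B_q$. Condition~(a) is automatic for odd $m$. For condition~(b), the $\ell=2$ instance of condition~(c) already forces each odd prime $p\mid m$ to satisfy $p\equiv 1\mod{2^{m(2,q)+1}}$, and since $2^{m(2,q)+1}>q$ no prime $p\le q$ can meet this, so~(b) is redundant. Remark~\ref{counting residue classes} then shows that the remaining instances of~(c) for odd~$\ell$ translate exactly into membership of each prime factor of~$m$ in one of the $B_q$ reduced residue classes modulo~$Q_q$ comprising~$\B_q$. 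Consequently $\#\A'_q(x)=\#(\cN_{\B_q}\cap[1,x])$, and I will henceforth work with the right-hand side.

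The next step is to analyze $F_{\B_q}(s)$ using Dirichlet characters modulo~$Q_q$. Decomposing the indicator as
\[
\mathbf{1}_{\B_q}(a) = \sum_{\chi \bmod Q_q} c_\chi \chi(a)
\qquad\text{with}\qquad
c_{\chi_0} = \frac{B_q}{\phi(Q_q)} = \beta_q,
\]
and comparing Euler products term by term, one obtains
\[
\log F_{\B_q}(s) = \beta_q \log \zeta(s) + \sum_{\chi \ne \chi_0} c_\chi \log L(s,\chi) + H(s),
\]
where $H(s)$ is an absolutely convergent (hence analytic) Dirichlet series on $\Re s > \tfrac12$: it collects the $k \ge 2$ terms of the Euler expansions together with the finite factor $\beta_q\sum_{p \mid Q_q} \log(1 - p^{-s})$ that relates $L(s,\chi_0)$ to $\zeta(s)$. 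Exponentiating shows that $G_{\B_q}(s) = F_{\B_q}(s)\zeta(s)^{-\beta_q}$ extends holomorphically and nonvanishingly into any region where $\zeta$ and all nonprincipal $L(s,\chi\bmod Q_q)$ are zero-free, and in particular into the standard zero-free region of $\zeta$ slightly to the left of $\Re s = 1$.

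Finally, the Selberg--Delange method applied to the factorization $F_{\B_q}(s) = \zeta(s)^{\beta_q} G_{\B_q}(s)$---via a Perron-type integral shifted around the branch cut of $\zeta^{\beta_q}$ at $s=1$ and evaluated by a Hankel contour---yields
\[
\#(\cN_{\B_q} \cap [1,x]) = \frac{x}{(\log x)^{1 - \beta_q}} \biggl( \frac{G_{\B_q}(1)}{\Gamma(\beta_q)} + O\bigl((\log x)^{-1/4}\bigr) \biggr),
\]
which is the desired asymptotic. This is essentially a direct generalization of~\cite[Proposition~4.1]{CM}, in which~$\B$ is taken to be the single residue class $\{1\mod q\}$, and its proof extends without modification. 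The main technical point, should one redo the argument in detail, is the verification that $G_{\B_q}$ satisfies the standard growth bounds along the shifted contour required to attain the stated $(\log x)^{-1/4}$ relative error; all the other ingredients (character decomposition and Hankel contour evaluation) are routine.
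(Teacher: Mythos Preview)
Your proof is correct and matches the paper's approach: both identify $\A'_q(x)$ with $\cN_{\B_q}\cap[1,x]$ via Proposition~\ref{prop 4.7} and then apply Selberg--Delange, the only difference being that the paper invokes the ready-made result~\cite[Theorem~3.4]{CM} directly rather than sketching its proof as you do. One small slip worth fixing: when $q$ is itself a power of~$2$ one has only $2^{m(2,q)+1}\ge q$, not strict inequality, but your deduction that condition~(b) is redundant still goes through since $p\equiv1\pmod{2^{m(2,q)+1}}$ forces $p\ge 1+2^{m(2,q)+1}\ge 1+q>q$.
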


\begin{proof}
By Proposition~\ref{prop 4.7}, we have $\A'_q(x) = \{n \leq x \colon n \in \cN_{\B_q} \}$, where $\B_q$ was defined in Definition~\ref{beta_q definition}. The proposition then follows directly from the asymptotic formula
\[
\#\{n \leq x \colon n \in \cN_\B \} = \frac{x}{(\log x)^{1 - \beta}}\biggl( \frac{G_\B(1)}{\Gamma(\beta)} + O\bigl( (\log x)^{-{1}/{4}} \bigr) \biggr)
\]
derived by Chang and the second author~\cite[Theorem~3.4]{CM} (in particular, that derivation verifies the claim that $G_\B(s)$ has a removable singularity at $s=1$).
\end{proof}

Lemma~\ref{split A3} is straightforward to generalize:

\begin{lemma} \label{split Aq}
We have $\#\A_q(x) = \#\A'_q(x) + \#\A'_q(\frac{x}{2})$.
\end{lemma}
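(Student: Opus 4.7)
The plan is to mimic the proof of Lemma~\ref{split A3} essentially verbatim, using Proposition~\ref{prop 4.7} in place of Lemma~\ref{lemma 3.1} to control the power of~$2$ dividing elements of $\A_q(x)$.

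First I would partition $\A_q(x)$ into its odd and even elements. The odd elements are counted by $\#\A'_q(x)$ by the very definition of $\A'_q(x)$, so the content of the lemma is in bijecting the even elements of $\A_q(x)$ with $\A'_q(\frac x2)$.

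For the even part, suppose $n \in \A_q(x)$ is even. Since $S(n)\ge q\ge 3$, Proposition~\ref{prop 4.7}(a) forces $4\nmid n$, so $n=2m$ with $m$ odd and $m\le\frac x2$. Using $M_n \cong M_2 \times M_m \cong M_m$ (since $M_2$ is trivial), we get $S(m)=S(n)\ge q$, so $m\in\A'_q(\frac x2)$. Conversely, if $m\in\A'_q(\frac x2)$, then $n=2m\le x$ is even with $S(n)=S(m)\ge q$, so $n\in\A_q(x)$. Hence $n\mapsto n/2$ is a bijection between the even elements of $\A_q(x)$ and $\A'_q(\frac x2)$, and adding the two cardinalities yields the claim.

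There is no real obstacle: the only subtlety is to note that Proposition~\ref{prop 4.7}(a) applies uniformly for every prime power $q\ge 3$ (not just $q=3$), and that the factor $M_2$ contributes nothing to the primary decomposition, so splitting off one factor of~$2$ preserves $S$.
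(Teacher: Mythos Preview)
Your proof is correct and follows essentially the same approach as the paper's proof, which likewise partitions $\A_q(x)$ into odd and even elements and invokes Proposition~\ref{prop 4.7}(a) to show that every even element has the form $2m$ with $m$ odd. Your version is slightly more explicit in verifying both directions of the bijection (using that $M_2$ is trivial so $S(2m)=S(m)$), whereas the paper leaves this implicit.
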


\begin{proof}
The odd elements of $\A_q(x)$ are counted by $\#\A'_q(x)$ by definition. On the other hand, every even $n\in\A_q(x)$ is of the form $n=2m$ where~$m \le \frac x2$ is odd, since $4\nmid n$ by Proposition~\ref{prop 4.7}; therefore the even elements of $\A_q(x)$ are counted by $\#\A'_q(\frac x2)$.
\end{proof}

\begin{definition} \label{leading constant definition}
Using the notation $\B_q$ and $\beta_q$ from Definition~\ref{beta_q definition} and the function~$G$ from Definition~\ref{notation 2.1}, define
\[
C_q = \frac{3G_{\B_q}(1)}{2\Gamma(\beta_q)} = \frac3{2\Gamma(\beta_q)} \lim_{s\to1} \biggl( \zeta(s)^{-\beta_q} \prod_{p\in\B_q} \frac1{1-p^{-s}} \biggr),
\]
where $\Gamma$ is the Euler Gamma-function.
\end{definition}

\begin{proof}[Proof of Theorem~\ref{main theorem}]
By equation~\eqref{E=A-A} and Lemma~\ref{split Aq},
\begin{align*}
\#\E_q(x) &= \#\A_q(x) - \#\A_{q^+}(x) \\ 
&= \#\A'_q(x) + \#\A'_q(\tfrac{x}{2}) + O\bigl( \#\A'_{q^+}(x) + \#\A_{q^+}(\tfrac{x}{2}) \bigr) \\ 
&= \frac{x}{(\log x)^{1 - \beta_q}}\bigg(\frac{G_{\B_q}(1)}{\Gamma(\beta_q)} + O({(\log x)}^{-{1}/{4}})\bigg) \\
&\qquad{}+ \frac{{x}/{2}}{\log ({x}/{2})^{1 - \beta_q}}\bigg(\frac{G_{\B_q}(1)}{\Gamma(\beta_q)} + O((\log {x}/{2})^{-{1}/{4}})\bigg) + O\biggl( \frac{x}{(\log x)^{1 - \beta_{q^+}}} \biggr) \\
&= \frac32 \frac{G_{\B_q}(1)}{\Gamma(\beta_q)} \frac{x}{(\log x)^{1 - \beta_q}} + O\biggl( \frac{x}{(\log x)^{1 - \beta_{q^+}}} \biggr),
\end{align*}
which completes the proof of Theorem~\ref{main theorem} by virtue of Definition~\ref{leading constant definition}.
\end{proof}

\section{Plan for evaluating the leading constants} \label{plan section}

The remainder of this paper analyzes the constant~$C_q$ from Definition~\ref{leading constant definition} for $q\in\{3,4,5\}$, establishing Theorem~\ref{q=3,4,5 theorem} and illustrating that in principle $C_q$ can be given a more concrete form for any prime power~$q$. The difficult component of~$C_q$ is the constant $G_{\B_q}(1)$, which we now outline a method for evaluating.

Recall Definition~\ref{beta_q definition} for the quantities~$Q_q$ and~$B_q$ and the set~$\B_q$, as well as Definition~\ref{notation 2.1} for the functions~$F_\B$ and~$G_\B$. Chang and the first author showed~\cite[equation~(3.3)]{CM} that
\begin{equation} \label{GBqs formula}
G_{\B_q}(s)^{\phi(Q_q)} = A_{\B_q}(s) {\prod_{p \mid Q_q}(1 - p^{-s})^{B_q} \prod_{\substack{\chi \mod{Q_q} \\ \chi \ne \chi_0}} L(s,\chi)^{{\sum_{b \in \B_q}\overline{\chi}(b)}}}.
\end{equation}
In the two products we can simply plug in $s=1$ and evaluate, using equations~\eqref{primitive L 1 chi} and~\eqref{imprimitive L 1 chi} below for the $L(1,\chi)$ terms. (It turns out that the exponent of $L(s,\chi)$ is an integer---see Lemma~\ref{lemma 5.1} below---and in particular real-valued. Therefore the right-hand side is a positive real number at $s=1$, as we can see after pairing the terms~$\chi$ and~$\overline\chi$ in the product; thus taking the $\phi(Q_q)$th root to solve for $G_{\B_q}(1)$ is unproblematic.) As for the $A_{\B_q}(s)$ factor, Chang and the first author further showed~\cite[equation~(3.1)]{CM}~that
\begin{equation} \label{ABqs formula}
A_{\B_q}(s) = F_{\B_q}(s)^{\phi(Q_q)} \prod_{\chi \mod{Q_q}} L(s,\chi)^{-\sum_{b \in \B_q}\overline{\chi}(b)}
\end{equation}
To evaluate $A_{\B_q}(1)$, we will rewrite the right-hand side in terms of convergent Euler products over primes in various residue classes, such as the one arising in equation~\eqref{Euler example} whose value at $s=1$ already appeared in equation~\eqref{convergent Euler}.

\section{Calculating the leading constant $C_3$} \label{C3 section}

Implementing the plan from the previous section when $q=3$ is essentially reproducing the calculation summarized in the proof of Lemma~\ref{A3 asymptotic}. Carrying out this computation explicitly will be helpful practice before addressing the more complicated cases $q=4$ and $q=5$ in later sections.

\begin{proof}[Proof of Theorem~\ref{q=3,4,5 theorem} for $q=3$]
We have $Q_3 = \prod_{\ell<3} \ell^{\mlq+1} = 2^{1+1} = 4$ and $\B_3 = \{1\mod4\}$. Equation~\eqref{ABqs formula} becomes
\begin{align*}
A_{\B_3}(s) &= F_{\B_3}(s)^{\phi(Q_3)} \prod_{\chi\mod4} L(s,\chi)^{-\overline{\chi}(1)} \\
&= \prod_{p\equiv1\mod4} (1-p^{-s})^{-2} \cdot L(s,\chi_0)^{-1} L(s,\chi_{-4})^{-1} \\
&= \prod_{p\equiv1\mod4} (1-p^{-s})^{-2} \cdot \prod_{p\ne2} (1-p^{-s}) (1-\chi_{-4}(p)p^{-s}),
\end{align*}
where $\chi_0$ and $\chi_{-4}$ are the principal and nonprincipal characters modulo~$4$, respectively. Splitting the odd primes into their residue classes modulo~$4$, this formula becomes
\begin{align}
A_{\B_3}(s) &= \prod_{p\equiv1\mod4} (1-p^{-s})^{-2} (1-p^{-s}) (1-p^{-s}) \cdot \prod_{p\equiv3\mod4} (1-p^{-s}) (1+p^{-s}) \notag \\
&= \prod_{p\equiv3\mod4} (1-p^{-2s}). \label{Euler example}
\end{align}
Notice that this product converges whenever $\Re s>\frac12$; in particular we can evaluate $A_{\B_3}(1)$ by simply plugging in $s=1$ to the right-hand side.

Now equation~\eqref{GBqs formula} becomes
\begin{align*}
G_{\B_3}(s)^2 &= \prod_{p\equiv3\mod4} (1-p^{-2s}) \cdot (1 - 2^{-s})^1 \prod_{\substack{\chi \mod4 \\ \chi \ne \chi_0}} L(s,\chi)^{\overline\chi(1)}
\end{align*}
and therefore, using equation~\eqref{Euler example},
\begin{align*}
G_{\B_3}(1) &= \biggl( \tfrac12 L(1,\chi_{-4}) \prod_{p\equiv3\mod4} (1-p^{-2}) \biggr)^{1/2} = \sqrt{\frac\pi8} \prod_{p\equiv3\mod4} (1-p^{-2})^{1/2}.
\end{align*}
Finally, since $\beta_3=\frac12$ and $\beta_{3^+}=\beta_4=\frac13$, Theorem~\ref{main theorem} becomes
\[
\#\E_3(x) = \frac{C_3 x}{(\log x)^{1/2}} + O \biggl( \frac x{(\log x)^{2/3}} \biggr),
\]
where by Definition~\ref{leading constant definition},
\begin{equation} \label{ABs0}
C_3 = \frac3{2\Gamma(1/2)} G_{\B_3}(1) = \frac3{2\sqrt\pi} \sqrt{\frac\pi8} \prod_{p\equiv3\mod4} (1-p^{-2})^{1/2}
\end{equation}
which is consistent with equation~\eqref{C3}.
\end{proof}

\section{Evaluating ${\sum_{b \in \B}\chi(b)}$} \label{char section}

In order to evaluate the exponents in equation~\eqref{ABqs formula}, we need to determine ${\sum_{b \in \B_q}\overline{\chi}(b)}$ in general. We accomplish this task in Lemma~\ref{lemma 5.1} below after introducing some necessary notation.

\begin{definition} \label{Bpq def}
Let $q\ge3$ be a prime power and~$\ell<q$ be a prime. With $\mlq$ as in Definition~\ref{definition 4.3}:
\begin{itemize}
\item Define $\Mlq = M_{\ell^{\mlq+1}}$ to be the full multiplicative group modulo $\ell^{\mlq+1}$.
\item Define $\Hlq = \{b \in M_{\ell^{\mlq + 1}} \colon b \equiv 1 \mod \ell\}$, which is also the subgroup of $(\ell-1)$st powers in $\Mlq$.
\item Let $\Ilq = \{ 1\mod{\ell^{\mlq+1}} \}$ be the singleton set containing only one residue class.
\item Set $\Blq = (\Mlq \setminus \Hlq) \cup \Ilq$, which is the set of residue classes described in Remark~\ref{counting residue classes}.
\end{itemize}
\end{definition}

\begin{remark} \label{Bq Blq remark}
Considering the large multiplicative group $M_{Q_q}$ where $Q_q$ is from Definition~\ref{beta_q definition}, we see that $M_{Q_q} \cong \prod_{\ell<q} \Mlq$ by the Chinese remainder theorem; under this isomorphism, the collection $\B_q$ of residue classes in $M_{Q_q}$ from Definition~\ref{beta_q definition} can be written exactly as $\B_q = \prod_{\ell<q} \Blq$.

Moreover, every Dirichlet character $\chi\mod{Q_q}$ can be uniquely written as the product of Dirichlet characters modulo $\ell^{\mlq+1}$ as~$\ell$ ranges over the primes less than~$q$; more precisely, there exist characters $\{\chi_\ell\colon\ell<q\}$ such that $\chi(b) = \prod_{\ell<q} \chi_\ell(b_\ell)$, where the $b_\ell$ are the components of the image of $b\in M_{Q_q}$ under the isomorphism from $M_{Q_q} \cong \prod_{\ell<q} \Mlq$. It follows, for instance,~that
\begin{equation} \label{decomposing chi eq}
\sum_{b\in \B_q} \chi(b) = \prod_{\ell<q} \sum_{b_\ell\in \Blq} \chi_\ell(b_\ell).
\end{equation}
\end{remark}

\begin{lemma} \label{lemma 5.1}
Let $q\ge3$ be a prime power and let $\ell<q$ be a prime. For any Dirichlet character $\chi\mod{\ell^{\mlq+1}}$,
\[
\sum_{b \in \Blq}\chi(b) = \begin{cases}
\ell^{\mlq}(\ell-1), & \text{if } \chi=\chi_0, \\
0, & \text{if } \chi\ne\chi_0
\end{cases} \Biggr\} - \begin{cases}
\ell^{\mlq}, & \text{if } \chi^{\ell-1}=\chi_0, \\
0, & \text{if } \chi^{\ell-1}\ne\chi_0
\end{cases} \Biggr\} + 1.
\]
In particular, if $\ell=2$ then $\sum_{b \in \Blq}\chi(b)=1$ for any $\chi\mod{\ell^{\mlq+1}}$.
\end{lemma}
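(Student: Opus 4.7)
The plan is to exploit the natural decomposition $\Blq = (\Mlq \setminus \Hlq) \cup \Ilq$, which is a disjoint union since $\Ilq = \{1\} \subset \Hlq$. This immediately rewrites the sum in question as
\[
\sum_{b \in \Blq} \chi(b) = \sum_{b \in \Mlq} \chi(b) - \sum_{b \in \Hlq} \chi(b) + \chi(1),
\]
and the three terms on the right correspond, in order, to the three pieces of the formula in the statement; so it suffices to evaluate each one separately.

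The first term is the standard orthogonality sum of $\chi$ over the full group $\Mlq$: it equals $|\Mlq| = \phi(\ell^{\mlq+1}) = \ell^{\mlq}(\ell - 1)$ if $\chi = \chi_0$ and $0$ otherwise. For the second, orthogonality applied to the subgroup $\Hlq$ gives $|\Hlq|$ if $\chi|_{\Hlq}$ is trivial and $0$ otherwise. Since Definition~\ref{Bpq def} identifies $\Hlq$ with the subgroup of $(\ell-1)$st powers in $\Mlq$, the restriction $\chi|_{\Hlq}$ is trivial exactly when $\chi(a^{\ell-1}) = 1$ for every $a \in \Mlq$, i.e., when $\chi^{\ell-1} = \chi_0$; and $|\Hlq| = \ell^{\mlq}$ because $\Hlq$ is the kernel of the reduction map $\Mlq \to M_\ell$, whose image has order $\ell - 1$. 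The third term is simply $\chi(1) = 1$.

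For the special case $\ell = 2$, every unit modulo $2^{m(2,q)+1}$ is odd, so $\Hlq = \Mlq$ and consequently $\Blq = \Ilq = \{1\}$; the sum then trivially equals $\chi(1) = 1$. One can also read this off the main formula: when $\ell - 1 = 1$ the two conditions $\chi = \chi_0$ and $\chi^{\ell-1} = \chi_0$ coincide, and the two contributions $\ell^{\mlq}(\ell-1)$ and $\ell^{\mlq}$ cancel, leaving only the $+1$.

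I do not foresee any real obstacle. The one point worth careful verification is the identification of $\Hlq$ with the $(\ell-1)$st-power subgroup of $\Mlq$, asserted in Definition~\ref{Bpq def}: for odd~$\ell$, $\Mlq$ is cyclic of order $\ell^{\mlq}(\ell-1)$, which has a unique subgroup of order $\ell^{\mlq}$, and this subgroup is simultaneously the image of the $(\ell-1)$st-power endomorphism and (by Fermat's little theorem applied to the quotient) the kernel of reduction modulo~$\ell$; for $\ell = 2$ the assertion is vacuous since the $(\ell-1)$st-power map is the identity.
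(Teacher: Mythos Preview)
Your proof is correct and follows essentially the same route as the paper: both use the decomposition $\Blq = (\Mlq \setminus \Hlq) \cup \Ilq$ to split the sum into three pieces, evaluate the full-group sum by orthogonality, the singleton as $\chi(1)=1$, and the subgroup sum by recognizing that $\chi|_{\Hlq}$ is trivial precisely when $\chi^{\ell-1}=\chi_0$. The paper spells out the subgroup orthogonality step concretely via the standard shift argument, whereas you invoke it directly and add the helpful verification that $\Hlq$ really is the $(\ell-1)$st-power subgroup; otherwise the arguments are the same.
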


\begin{proof}
We concentrate on the first assertion since the second one follows directly from it (or directly from the fact that $\Blq=\{1\mod{\ell^{\mlq+1}}\}$ when $\ell=2$).
By definition,
\begin{align*}
{\sum_{b \in \Blq}\chi(b)} &= {\sum_{b \in \Mlq}\chi(b)} - {\sum_{b \in \Hlq}\chi(b)} + {\sum_{b \in \Ilq}\chi(b)} \\
&= \begin{cases}
\ell^{\mlq}(\ell-1), & \text{if } \chi=\chi_0, \\
0, & \text{if } \chi\ne\chi_0,
\end{cases}
\Biggr\} - {\sum_{b \in \Hlq}\chi(b)} + 1
\end{align*}
by orthogonality. Comparing this expression to the desired formula, it suffices to prove that
\[
{\sum_{b \in \Hlq}\chi(b)} = \begin{cases}
\ell^{\mlq}, & \text{if } \chi^{\ell-1}=\chi_0, \\
0, & \text{if } \chi^{\ell-1}\ne\chi_0.
\end{cases}
\]
But this is an immediate consequence of the fact that the subgroup $\{\chi\mod{\ell^{\mlq}} \colon \chi^{\ell-1} = \chi_0\}$ is the dual group to the subgroup $\Hlq$ which is the set of $(\ell-1)$st powers modulo $\mlq$. More concretely, if $\chi^{\ell-1}=\chi_0$, then for every~$b \in \Hlq$ there exists $a\in\Mlq$ such that $a^{\ell-1}\equiv b\mod {\ell^{\mlq+1}}$, and therefore $\chi(b) = \chi(a^{\ell-1}) = \chi^{\ell-1}(a) = \chi_0(a)=1$ for each of the $\ell^{\mlq}$ elements of $\Hlq$. On the other hand, if $\chi^{\ell-1}\ne\chi_0$, then we can choose $c\in\Mlq$ such that $\chi^{\ell-1}(c)\ne1$; then, since $c^{\ell-1}$ is in the subgroup~$\Hlq$,
\[
{\sum_{b \in \Hlq}\chi(b)} = {\sum_{b \in \Hlq}\chi(c^{\ell-1}b)} = \chi^{\ell-1}(c) {\sum_{b \in \Hlq}\chi(b)}
\]
which forces ${\sum_{b \in \Hlq}\chi(b)}=0$.
\end{proof}

By Definition~\ref{notation 2.1} and Remark~\ref{Bq Blq remark}, we can write equation~\eqref{ABqs formula} as
\begin{align}
A_{\B_q}(s) &= \prod_{p \in \B_q} (1 - p^{-s})^{-\phi(Q_q)} \cdot \prod_{\chi \mod{Q_q}} \prod_p (1 - \chi(p) p^{-s})^{\prod_{\ell<q} \sum_{b_\ell\in \Blq} \chi_\ell(b_\ell)} \notag \\
&= \prod_p \biggl( \begin{cases}
(1 - p^{-s})^{-\phi(Q_q)}, & \text{if } p\in\B_q, \\
1, & \text{if } p\notin B_q
\end{cases} \Biggr\} \prod_{\chi \mod{Q_q}} (1 - \chi(p) p^{-s})^{\prod_{\ell<q} \sum_{b_\ell\in \Blq} \chi_\ell(b_\ell)} \biggr). \label{okok}
\end{align}
The factor corresponding to a prime~$p$ is some rational function of $p^{-s}$ that depends only upon the residue class of $p\mod{Q_q}$. When we compute these $\phi(Q_q)$ rational functions, the following lemma implies that each one is actually a rational function of $p^{-ks}$ for some integer $k\ge2$ (depending on the residue class). Consequently, while the above formulas are valid for $\Re s>1$, the entire expression for $A_{\B_q}(s)$ will end up converging for $\Re s>\frac12$ at least; in particular, we can evaluate $A_{\B_q}(1)$ by simply plugging $s=1$ into the resulting expression.

\begin{lemma} \label{product shape}
Given $Q\in\N$, let~$H$ be any subgroup of the group of Dirichlet characters modulo~$Q$. If $p\nmid Q$ and the cardinality of $\{\chi(p)\colon \chi\in H\}$ equals~$k$, then
\[
\prod_{\chi\in H} (1-\chi(p)p^{-s}) = (1-p^{-ks})^{(\#H)/k}.
\]
\end{lemma}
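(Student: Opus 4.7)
The plan is to interpret the map $\chi \mapsto \chi(p)$ as a group homomorphism and then invoke the factorization of $1-X^k$ over the $k$th roots of unity.

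First I would note that since $p \nmid Q$, the evaluation map $\ep_p \colon H \to \C^\times$ defined by $\ep_p(\chi) = \chi(p)$ is a well-defined group homomorphism (because $(\chi_1\chi_2)(p) = \chi_1(p)\chi_2(p)$). The image $\ep_p(H) = \{\chi(p)\colon \chi\in H\}$ is therefore a finite subgroup of $\C^\times$, which has cardinality~$k$ by hypothesis. Every finite subgroup of $\C^\times$ is cyclic and consists of roots of unity, so $\ep_p(H)$ is exactly the group $\mu_k$ of $k$th roots of unity.

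Next, by the first isomorphism theorem the fibers of $\ep_p$ all have the same size $\#H/k$; equivalently, each $k$th root of unity~$\zeta$ is attained by exactly $\#H/k$ characters $\chi\in H$. (In particular, $\#H/k$ is an integer, which justifies the exponent appearing in the statement.) Grouping the factors of the product by the value of $\chi(p)$, I would then write
\[
\prod_{\chi\in H}\bigl(1-\chi(p)p^{-s}\bigr) = \prod_{\zeta\in\mu_k}\bigl(1-\zeta p^{-s}\bigr)^{\#H/k} = \biggl(\prod_{\zeta\in\mu_k}\bigl(1-\zeta p^{-s}\bigr)\biggr)^{\#H/k}.
\]

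Finally, I would apply the standard identity $\prod_{\zeta\in\mu_k}(1-\zeta X) = 1-X^k$ (which follows from comparing roots of the monic polynomial $X^k-1$) with $X=p^{-s}$, yielding $(1-p^{-ks})^{\#H/k}$ as desired. There is no real obstacle here: the only substantive input is that a finite subgroup of $\C^\times$ is cyclic of roots of unity, and the rest is bookkeeping. The lemma is essentially a repackaging of the cyclotomic factorization in the language of character sums, which is exactly the form needed to see that each local Euler factor in equation~\eqref{okok} simplifies to a rational function of $p^{-ks}$ for some $k\ge 2$, hence converges at $s=1$.
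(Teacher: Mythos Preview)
Your proof is correct and follows essentially the same approach as the paper: identify the image of the evaluation homomorphism $\chi\mapsto\chi(p)$ as the group $\mu_k$ of $k$th roots of unity, note that each value is hit $\#H/k$ times, and then apply the factorization $\prod_{\zeta\in\mu_k}(1-\zeta X)=1-X^k$. The paper's argument is slightly terser (it asserts the equal multiplicity without explicitly naming the first isomorphism theorem), but the substance is identical.
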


\begin{proof}
Since the image $\{\chi(p)\colon \chi\in H\}$ is a subgroup of $\C^\times$, it must equal the cyclic group of $k$th roots of unity. It follows easily that the image contains all $k$th roots of unity each with equal multiplicity $(\#H)/k$. Therefore
\[
\prod_{\chi\in H} (1-\chi(p)p^{-s}) = \prod_{j=0}^{k-1} (1-e^{2\pi ij/k}p^{-s})^{(\#H)/k} = (1-p^{-ks})^{(\#H)/k}
\]
by the factorization of $1-z^k$ over the complex numbers.
\end{proof}

\section{Calculating the leading constant for $q=4$} \label{C4 section}

We now have all the tools we need to calculate the constant~$C_4$ from Theorem~\ref{q=3,4,5 theorem}.

\begin{lemma} \label{AB4 lemma}
For $\Re s>1$ we have
\begin{multline} \label{ABs2}
A_{\B_4}(s) = \prod_{p \equiv 5,29 \mod{36}} (1-p^{-6s})^2 (1-p^{-2s})^{-6} \prod_{p \equiv 7,11,23,31 \mod{36}} (1-p^{-6s})^2 \\
\times \prod_{p \equiv 13,25 \mod{36}} (1-p^{-3s})^4 \prod_{p \equiv 19,35 \mod{36}} (1-p^{-2s})^6.
\end{multline}
\end{lemma}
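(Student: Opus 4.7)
The plan is to deduce Lemma~\ref{AB4 lemma} from the general formula~\eqref{okok} specialized to $q=4$.

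First I would read off the relevant data from Definitions~\ref{beta_q definition} and~\ref{Bpq def}. The primes less than~$4$ are $\ell\in\{2,3\}$ with $m(2,4)=m(3,4)=1$, so $Q_4=4\cdot 9=36$, $B_4=1\cdot 4=4$, and $\phi(Q_4)=12$. The Chinese remainder theorem converts the local pieces $\B_{2,4}\equiv\{1\}\mod{4}$ and $\B_{3,4}\equiv\{1,2,5,8\}\mod{9}$ into $\B_4\equiv\{1,5,17,29\}\mod{36}$. In particular $2,3\notin\B_4$, so the corresponding factors at these primes in~\eqref{okok} are trivial.

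Next I would evaluate the exponent $e_\chi:=\prod_{\ell<4}\sum_{b_\ell\in\Blq}\chi_\ell(b_\ell)$ for each character $\chi\mod{36}$. By Lemma~\ref{lemma 5.1} the $\ell=2$ factor is always~$1$, while the $\ell=3$ factor takes the value $4$, $-2$, or~$1$ according as $\chi_3=\chi_0$, or $\chi_3\neq\chi_0$ but $\chi_3^2=\chi_0$, or $\chi_3^2\neq\chi_0$. This three-valued function can be written additively as $e_\chi=1-3\cdot\mathbf{1}[\chi_3^2=\chi_0]+6\cdot\mathbf{1}[\chi_3=\chi_0]$, which splits $\prod_\chi(1-\chi(p)p^{-s})^{e_\chi}$ into three subproducts: the full product over all~$12$ characters mod~$36$, the $-3$rd power of the product over the $4$-element subgroup $H_2=\{\chi:\chi_3^2=\chi_0\}$, and the $6$th power of the product over the $2$-element subgroup $H_1=\{\chi:\chi_3=\chi_0\}$.

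Lemma~\ref{product shape} then collapses each of these three subproducts into a single factor of the form $(1-p^{-ks})^{c/k}$ depending only on the residue class of $p\mod{36}$. I would tabulate, for each of the $12$ reduced residue classes modulo~$36$, three parameters: the order of $p$ in $(\Z/36\Z)^\times$, the cardinality of $\{\chi(p):\chi\in H_2\}$ (governed by the quadratic character mod~$9$), and the cardinality of $\{\chi(p):\chi\in H_1\}$ (which is $1$ or $2$ according to $\chi_{-4}(p)$). After multiplying in the $(1-p^{-s})^{-12}$ correction for the four classes in $\B_4$ and collecting like terms across residue classes, the formula~\eqref{ABs2} emerges. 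The main obstacle is simply the bookkeeping itself; the feature worth verifying carefully is that the two classes $1$ and $17\mod{36}$ in $\B_4$ produce factors that are identically~$1$, which is why they do not appear in the stated product, and that the remaining ten classes collapse exactly into the four families $\{5,29\}$, $\{7,11,23,31\}$, $\{13,25\}$, and $\{19,35\}\mod{36}$ written in the lemma.
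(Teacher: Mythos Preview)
Your proposal is correct and follows essentially the same route as the paper: specialize equation~\eqref{okok} to $q=4$, use Lemma~\ref{lemma 5.1} to reduce the exponent to the $\ell=3$ contribution (yielding the values $4$, $-2$, $1$, equivalently the decomposition into the three character subgroups with exponents $6$, $-3$, $1$), apply Lemma~\ref{product shape} to each subgroup, and then tabulate over the twelve residue classes modulo~$36$. One small wording slip: the size of $\{\chi(p):\chi\in H_2\}$ is governed jointly by $\chi_{-4}(p)$ and the quadratic character modulo~$9$, not by the latter alone, but this does not affect the actual tabulation you describe.
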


\noindent Note that the products on the right-hand side converge absolutely for $\Re s>\frac12$.

\begin{proof}
When $q=4$, we have $Q_4 = \prod_{\ell<4} \ell^{\mlq+1} = 2^{1+1}3^{1+1} = 36$; we also have $\B_{2,4} = \{1\mod4\}$ and $\B_{3,4} = \{1,2,5,8\mod9\}$ and therefore $\B_4 = \{1,5,17,29\mod{36}\}$. Equation~\eqref{okok} then simplifies since the sum over $\B_{2,4}$ always equals~$1$:
\begin{align}
A_{\B_4}(s) = \prod_p \biggl( \begin{cases}
(1 - p^{-s})^{-12}, & \text{if } p\in\B_4, \\
1, & \text{if } p\notin B_4
\end{cases} \Biggr\} \prod_{\chi \mod{36}} (1 - \chi(p) p^{-s})^{\sum_{b_3\in \B_{3,4}} \chi_3(b_3)} \biggr),
\end{align}
where $\chi_3$ is the character\mod9 that is the restriction of~$\chi$ to the residue class $1\mod4$.
By Lemma~\ref{lemma 5.1}, this becomes
\begin{multline*}
A_{\B_4}(s) = \prod_p \biggl( \begin{cases}
(1 - p^{-s})^{-12}, & \text{if } p\in\B_4, \\
1, & \text{if } p\notin \B_4
\end{cases} \Biggr\} \\
\times \prod_{\substack{\chi \mod{36} \\ \chi_3=\chi_0}} (1 - \chi(p) p^{-s})^6
\prod_{\substack{\chi \mod{36} \\ \chi_3^2=\chi_0}} (1 - \chi(p) p^{-s})^{-3}
\prod_{\chi \mod{36}} (1 - \chi(p) p^{-s}) \biggr).
\end{multline*}
The products on the second line are all (powers of) products of the shape treated by Lemma~\ref{product shape}, over character subgroups of cardinality~$2$, $4$, and~$12$, respectively; the expression evaluates to
\begin{multline} \label{cases needed}
A_{\B_4}(s) = \prod_p \biggl( \begin{cases}
(1 - p^{-s})^{-12}, & \text{if } p\in\B_4, \\
1, & \text{if } p\notin B_4
\end{cases} \Biggr\} \\
\times (1 - p^{-k_1s})^{12/k_1} (1 - p^{-k_2s})^{-12/k_2} (1 - p^{-k_3s})^{12/k_3} \biggr),
\end{multline}
where $k_1 = \#\{ \chi(p)\colon \chi\mod{36},\, \chi_3=\chi_0\}$ and $k_2 = \#\{ \chi(p)\colon \chi\mod{36},\, \chi_3^2=\chi_0\}$ and $k_3 = \#\{ \chi(p)\colon \chi\mod{36}\}$, quantities that depend only upon $p\mod{36}$. A case-by-case analysis of the~$12$ residue classes in $M_{36}$ results in the expression asserted in the lemma. To see two example cases:
\begin{itemize}
\item The residue class $p\equiv29\mod{36}$, which is $1\mod4$ and $2\mod9$, is in $\B_4$. In this case
equation~\eqref{cases needed} becomes $(1-p^{-s})^{-12} (1-p^{-s})^{12} (1-p^{-2s})^{-6} (1-p^{-6s})^2$
since $k_1=1$, $k_2=2$, and $k_3=6$.
\item The residue class $p\equiv31\mod{36}$, which is $3\mod4$ and $4\mod9$, is not in $\B_4$. In this case
equation~\eqref{cases needed} becomes $1(1-p^{-2s})^{6} (1-p^{-2s})^{-6} (1-p^{-6s})^2$
since $k_1=2$, $k_2=2$, and $k_3=6$. \qedhere
\end{itemize}
\end{proof}

\begin{remark}
The quantity $A_{\B_4}(1)$ can be calculated numerically to several decimal places by plugging in $s=1$ to equation~\eqref{ABs2} and approximating the resulting absolutely convergent products numerically using all the primes up to some bound.
\end{remark}

\begin{proof}[Proof of Theorem~\ref{q=3,4,5 theorem} for $q=4$]
Equation~\eqref{GBqs formula} becomes
\[
G_{\B_4}(s)^{12} = A_{\B_4}(s) (1 - 2^{-s})^4(1 - 3^{-s})^4 \prod_{\substack{\chi \mod{36} \\ \chi \ne \chi_0}} L(s,\chi)^{{\sum_{b \in \B_4}\overline{\chi}(b)}}.
\]
Therefore by Lemma~\ref{lemma 5.1},
\[
G_{\B_4}(1) = \frac1{3^{1/3}} A_{\B_4}(1)^{1/12} \prod_{\substack{\chi \ne \chi_0 \\ \chi_3=\chi_0}} L(1,\chi)^{1/3} \prod_{\substack{\chi \ne \chi_0 \\ \chi_3^2=\chi_0}} L(1,\chi)^{-1/6} \prod_{\chi \ne \chi_0} L(1,\chi)^{1/12},
\]
where the products are over the characters\mod{36} with the given properties, remembering that~$\chi_3$ is the restriction of~$\chi$ to the residue class $1\mod4$. (As a reality check on the notation, the first product runs over all characters $\chi\mod{36}$ that are nonprincipal but for which the corresponding character $\chi_3\mod9$ is principal; there turns out to be exactly one such character, namely the character induced by the nonprincipal character $\chi_{-4}\mod4$. The second and third products run over~$3$ and~$11$ characters\mod{36}, respectively.)

Finally, by Definition~\ref{leading constant definition},
\[
C_4 = \frac{3G_{\B_4}(1)}{2\Gamma(\beta_4)} = \frac{3^{2/3}}{2\Gamma(1/3)} A_{\B_4}(1)^{1/12} \prod_{\substack{\chi \ne \chi_0 \\ \chi_3=\chi_0}} L(1,\chi)^{1/3} \prod_{\substack{\chi \ne \chi_0 \\ \chi_3^2=\chi_0}} L(1,\chi)^{-1/6} \prod_{\chi \ne \chi_0} L(1,\chi)^{1/12}
\]
which is identical to equation~\eqref{C4} by Lemma~\ref{AB4 lemma}.
\end{proof}

We briefly describe how we calculated the numerical approximation $C_4 \approx 0.4200344$. Standard computational software can calculate ${3^{2/3}}/{2\Gamma(1/3)} \approx 0.3882291057$ to any desired accuracy. Furthermore, there are classical results that allow us to write $L(1,\chi)$ as a finite sum of easily calculated quantities. For example, \cite[Theorem~9.9]{MV} tells us that if $q>1$ and $\chi$ is a primitive character modulo~$q$, then
\begin{equation} \label{primitive L 1 chi}
L(1, \chi)= \begin{cases}
\displaystyle\frac{i \pi \tau(\chi)}{q^2} \sum_{k \in M_q} \overline{\chi}(k) k, & \text{if }\chi \text { is odd,} \\
\displaystyle-\frac{\tau(\chi)}{q} \sum_{k \in M_q} \overline{\chi}(k) \log \sin \frac{k \pi}{q}, & \text{if }\chi \text { is even.}
\end{cases}
\end{equation}
where $\tau(\chi)=\sum_{a \in M_q} \chi(a) e^{2 \pi i a / q}$ is the Gauss sum. Furthermore, if $\chi$ is a Dirichlet character\mod q which is induced by the primitive character $\chi^* \mod{q^*}$, then plugging $s=1$ into \cite[equation~(9.2)]{MV} yields
\begin{equation} \label{imprimitive L 1 chi}
L(1, \chi) = L(1, \chi^*) \prod_{p \mid q} \biggl( 1- \frac{\chi^*(p)}{p} \biggr).
\end{equation}
Therefore for any nonprincipal character, $L(1,\chi)$ can be calculated with standard software to any desired accuracy. In this way we find that
\[
\prod_{\substack{\chi \ne \chi_0 \\ \chi_3=\chi_0}} L(1,\chi)^{1/3} \prod_{\substack{\chi \ne \chi_0 \\ \chi_3^2=\chi_0}} L(1,\chi)^{-1/6} \prod_{\chi \ne \chi_0} L(1,\chi)^{1/12} \approx 1.0608329542.
\]

It remains to calculate
\begin{multline*}
A_{\B_4}(1)^{1/12} = \prod_{p \equiv 5,29 \mod{36}} (1-p^{-6})^{1/6} (1-p^{-2})^{-1/2} \prod_{p \equiv 7,11,23,31 \mod{36}} (1-p^{-6})^{1/6} \\
\times \prod_{p \equiv 13,25 \mod{36}} (1-p^{-3})^{1/3} \prod_{p \equiv 19,35 \mod{36}} (1-p^{-2})^{1/2}.
\end{multline*}
We approximate $A_{\B_4}(1)^{1/12}$ by its truncation using all of the primes up to $10^7$, which evaluates to $1.0198817240$. To control the error in this approximation, we note that $\bigl| \log(1-p^{-2})^{-1/2} \bigr| \le \frac12p^{-2}$ by a linear approximation to the convex function $\log(1-t)$; the upper bounds for the absolute logarithms of the other possible Euler factors are all majorized by this one. Thus the absolute logarithm of the truncation error is at most 
\begin{align*}
\biggl| \log \prod_{p>10^7} (1-p^{-2})^{-1/2} \bigg| \le \sum_{p>10^7} \frac1{2p^2} &= \int_{10^7}^\infty \frac1{2t^2} \,d\bigl( \pi(t) - \pi(10^7) \bigr) = \int_{10^7}^\infty \frac{\pi(t) - \pi(10^7)}{t^3} \,dt
\end{align*}
after integrating by parts. Using the Rosser--Schoenfeld bound $\pi(x) < 1.25506 x/\log x$ \cite[equation~(3.6)]{RS}, we obtain the crude estimate
\[
\int_{10^7}^\infty \frac{\pi(t) - \pi(10^7)}{t^3} \,dt < \int_{10^7}^\infty \frac{1.25506t/\log t}{t^3} \,dt < \frac{1.25506}{\log10^7} \int_{10^7}^\infty \frac{dt}{t^2} = \frac{1.25506}{10^7\log10^7} < 8\cdot10^{-9}.
\]
Therefore the true value of $A_{\B_4}(1)^{1/12}$ is in the interval $(1.019881724 e^{-8\cdot10^{-9}} , 1.019881724 e^{8\cdot10^{-9}} )$.

We conclude that $C_4 \approx 0.3882291057 \cdot 1.0608329542 \cdot 1.019881724 \approx 0.4200344$ and that this approximation is accurate to the indicated seven decimal places.

\section{Calculating the leading constant for $q=5$} \label{C5 section}

As a final example, we use the same technique to calculate the constant~$C_5$ from Theorem~\ref{q=3,4,5 theorem}.

\begin{lemma} \label{AB5 lemma}
For $\Re s>1$ we have
\begin{multline} 
A_{\B_5}(s) = \prod_{\substack{p \equiv 19,35,37,53, \\ 55,71 \mod {72}}} (1-p^{-2s})^{12} \prod_{\substack{p \equiv 5,7,11,13,23,29,31,43, \\ 47,59,61,67 \mod {72}}} (1-p^{-6s})^4 \\
\times \prod_{p \equiv 41,65 \mod {72}} (1-p^{-6s})^4 (1-p^{-2s})^{12} \prod_{p \equiv 25,49 \mod {72}} (1-p^{-3s})^8 .
\end{multline}
\end{lemma}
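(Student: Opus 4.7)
The plan is to follow the four-step strategy used to prove Lemma~\ref{AB4 lemma}, adjusting only the numerical parameters for $q=5$. First I would compute $m(2,5)=2$ and $m(3,5)=1$, yielding $Q_5 = 2^3 \cdot 3^2 = 72$, $\phi(Q_5)=24$, $\B_{2,5} = \{1\mod 8\}$ and $\B_{3,5} = \{1,2,5,8\mod 9\}$; the Chinese remainder theorem then gives $\B_5 = \{1,17,41,65\mod 72\}$. The key simplification is that, by the second assertion of Lemma~\ref{lemma 5.1}, the $\ell=2$ factor in the decomposition~\eqref{decomposing chi eq} always equals~$1$, so the exponent of each $(1-\chi(p)p^{-s})$ appearing in equation~\eqref{okok} depends only on $\chi_3$, the character modulo~$9$ obtained by restricting $\chi\mod{72}$ to residues $\equiv 1\mod 8$.

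I would then invoke the first assertion of Lemma~\ref{lemma 5.1} with $\ell=3$ and $m(3,5)=1$, which gives
\[
\sum_{b_3\in\B_{3,5}} \chi_3(b_3) = 6\cdot\mathbf{1}[\chi_3=\chi_0] - 3\cdot\mathbf{1}[\chi_3^2=\chi_0] + 1,
\]
and split the product over $\chi\mod{72}$ in~\eqref{okok} into three sub-products indexed by $\{\chi_3=\chi_0\}$, $\{\chi_3^2=\chi_0\}$, and all characters, with respective exponents $6$, $-3$, and~$1$. Applying Lemma~\ref{product shape} to each sub-product---over character subgroups of sizes $4$, $8$, and~$24$ respectively---collapses the $p$-factor of $A_{\B_5}(s)$ to
\[
\mathbf{1}[p\in\B_5](1-p^{-s})^{-24} \cdot (1-p^{-k_1 s})^{24/k_1} (1-p^{-k_2 s})^{-24/k_2} (1-p^{-k_3 s})^{24/k_3},
\]
where $k_1 = |\{\chi(p)\colon \chi\mod{72},\, \chi_3=\chi_0\}|$, $k_2 = |\{\chi(p)\colon \chi\mod{72},\, \chi_3^2=\chi_0\}|$, and $k_3 = |\{\chi(p)\colon \chi\mod{72}\}|$, each depending only on the class of $p\mod{72}$.

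The remaining step is the case-by-case analysis over the $24$ residue classes of $M_{72} \cong M_8\times M_9$. I would observe that $k_1$ equals the order of $p$ in $M_8$ (so $k_1=1$ when $p\equiv 1\mod 8$, and $k_1=2$ otherwise), that $k_3$ equals the lcm of the orders of $p$ in $M_8$ and $M_9$, and that $k_2=k_1$ precisely when $p$ is a quadratic residue modulo~$9$ (equivalently, $p\equiv 1\mod 3$); otherwise $k_2 = \mathrm{lcm}(k_1,2)$, since the characters with $\chi_3^2=\chi_0$ restrict on the mod-$9$ component to $\{\chi_0,\chi_{-3}\}$. Tabulating the Euler-factor shape for each of the $24$ residue classes---and noting that whenever $p\in\B_5$ one has $k_1=1$, so the $(1-p^{-s})^{-24}$ factor cancels against $(1-p^{-s})^{24/k_1}$---produces exactly the four groupings in the lemma's statement.

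The main obstacle is purely organizational: the $24$ residue classes collapse into only four Euler-factor shapes, but the correct assignment requires careful bookkeeping of the triple $(k_1,k_2,k_3)$ together with the membership test for $\B_5$. The convergence of the final product on $\Re s>\tfrac12$ asserted after the statement follows at once from the observation that every surviving exponent of $p^{-s}$ is at least~$2$.
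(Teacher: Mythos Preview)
Your proposal is correct and follows essentially the same approach as the paper: compute $Q_5=72$ and $\B_5$, use Lemma~\ref{lemma 5.1} to reduce the exponents to the $\chi_3$-component, split into the three character subgroups with exponents $6,-3,1$, apply Lemma~\ref{product shape}, and finish with the case check over $M_{72}$. Your added structural description of $k_1,k_2,k_3$ in terms of orders in $M_8$ and $M_9$ is a nice organizational aid that the paper leaves implicit, but the underlying argument is the same.
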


\begin{proof}
When $q=5$, we have $Q_5 = \prod_{\ell<5} \ell^{\mlq+1} = 2^{2+1}3^{1+1} = 72$; we also have $\B_{2,5} = \{1\mod8\}$ and $\B_{3,5} = \{1,2,5,8\mod9\}$ and therefore $\B_4 = \{1,17,41,65\mod{72}\}$. Equation~\eqref{okok} then simplifies since the sum over $\B_{2,5}$ always equals~$1$:
\begin{align}
A_{\B_5}(s) = \prod_p \biggl( \begin{cases}
(1 - p^{-s})^{-24}, & \text{if } p\in\B_5, \\
1, & \text{if } p\notin B_5
\end{cases} \Biggr\} \prod_{\chi \mod{72}} (1 - \chi(p) p^{-s})^{\sum_{b_3\in \B_{3,5}} \chi_3(b_3)} \biggr),
\end{align}
where $\chi_3$ is the character\mod9 that is the restriction of~$\chi$ to the residue class $1\mod4$.
By Lemma~\ref{lemma 5.1}, this becomes
\begin{multline*}
A_{\B_5}(s) = \prod_p \biggl( \begin{cases}
(1 - p^{-s})^{-24}, & \text{if } p\in\B_5, \\
1, & \text{if } p\notin \B_5
\end{cases} \Biggr\} \\
\times \prod_{\substack{\chi \mod{72} \\ \chi_3=\chi_0}} (1 - \chi(p) p^{-s})^6
\prod_{\substack{\chi \mod{72} \\ \chi_3^2=\chi_0}} (1 - \chi(p) p^{-s})^{-3}
\prod_{\chi \mod{72}} (1 - \chi(p) p^{-s}) \biggr).
\end{multline*}
The products on the second line are all (powers of) products of the shape treated by Lemma~\ref{product shape}, over character subgroups of cardinality~$4$, $8$, and~$24$, respectively; the expression evaluates to
\begin{multline} \label{cases needed}
A_{\B_5}(s) = \prod_p \biggl( \begin{cases}
(1 - p^{-s})^{-24}, & \text{if } p\in\B_5, \\
1, & \text{if } p\notin B_5
\end{cases} \Biggr\} \\
\times (1 - p^{-k_1s})^{24/k_1} (1 - p^{-k_2s})^{-24/k_2} (1 - p^{-k_3s})^{24/k_3} \biggr),
\end{multline}
where $k_1 = \#\{ \chi(p)\colon \chi\mod{72},\, \chi_3=\chi_0\}$ and $k_2 = \#\{ \chi(p)\colon \chi\mod{72},\, \chi_3^2=\chi_0\}$ and $k_3 = \#\{ \chi(p)\colon \chi\mod{72}\}$, quantities that depend only upon $p\mod{72}$. A case-by-case analysis of the~$24$ residue classes in $M_{72}$ results in the expression asserted in the lemma.
\end{proof}

\begin{proof}[Proof of Theorem~\ref{q=3,4,5 theorem} for $q=5$]
Equation~\eqref{GBqs formula} becomes
\[
G_{\B_5}(s)^{24} = A_{\B_5}(s) (1 - 2^{-s})^4(1 - 3^{-s})^4 \prod_{\substack{\chi \mod{72} \\ \chi \ne \chi_0}} L(s,\chi)^{{\sum_{b \in \B_5}\overline{\chi}(b)}}.
\]
Therefore by Lemma~\ref{lemma 5.1},
\[
G_{\B_5}(1) = \frac1{3^{1/6}} A_{\B_5}(1)^{1/24} \prod_{\substack{\chi \ne \chi_0 \\ \chi_3=\chi_0}} L(1,\chi)^{1/6} \prod_{\substack{\chi \ne \chi_0 \\ \chi_3^2=\chi_0}} L(1,\chi)^{-1/12} \prod_{\chi \ne \chi_0} L(1,\chi)^{1/24},
\]
where the products are over the characters\mod{72} with the given properties, remembering that~$\chi_3$ is the restriction of~$\chi$ to the residue class $1\mod8$.

Finally, by Definition~\ref{leading constant definition},
\[
C_5 = \frac{3G_{\B_5}(1)}{2\Gamma(\beta_5)} = \frac{3^{5/6}}{2\Gamma(1/6)} A_{\B_5}(1)^{1/24} \prod_{\substack{\chi \ne \chi_0 \\ \chi_3=\chi_0}} L(1,\chi)^{1/6} \prod_{\substack{\chi \ne \chi_0 \\ \chi_3^2=\chi_0}} L(1,\chi)^{-1/12} \prod_{\chi \ne \chi_0} L(1,\chi)^{1/24}
\]
which is identical to equation~\eqref{C5} by Lemma~\ref{AB5 lemma}.
\end{proof}

As before, standard computational software can calculate ${3^{5/6}}/{2\Gamma(1/6)} \approx 0.2095133578$, and the known formulas for $L(1,\chi)$ allow us to calculate
\[
\prod_{\substack{\chi \ne \chi_0 \\ \chi_3=\chi_0}} L(1,\chi)^{1/6} \prod_{\substack{\chi \ne \chi_0 \\ \chi_3^2=\chi_0}} L(1,\chi)^{-1/12} \prod_{\chi \ne \chi_0} L(1,\chi)^{1/24} \approx 0.9354960209.
\]
Finally, the method of the previous section allows us to approximate
\begin{align*}
A_{\B_5}(1)^{1/24} &= \prod_{\substack{p \equiv 19,35,37,53, \\ 55,71 \mod {72}}} (1-p^{-2})^{1/2} \prod_{\substack{p \equiv 5,7,11,13,23,29,31,43, \\ 47,59,61,67 \mod {72}}} (1-p^{-6})^{1/6} \prod_{p \equiv 25,49 \mod {72}} (1-p^{-3})^{1/3} \\
&\qquad{}\times \prod_{p \equiv 41,65 \mod {72}} (1-p^{-6})^{1/6} (1-p^{-2})^{1/2} \approx 0.9980828307,
\end{align*}
and the error analysis from the previous section is valid without change for this product. We conclude that $C_5 \approx 0.2095133578 \cdot 0.9354960209 \cdot 0.9980828307 \approx 0.2095134$ accurate rounded to the indicated seven decimal places.

\section*{Acknowledgments}

The first author was supported in part by a Natural Sciences and Engineering Council of Canada Discovery Grant.

\end{document}